\newtheorem{theorem}{Theorem}[section]
\newtheorem{definition}[theorem]{Definition}
\newtheorem{lemma}[theorem]{Lemma}
\newtheorem{corollary}[theorem]{Corollary}
\newtheorem{proposition}[theorem]{Proposition}
\newtheorem{question}{Question}
\newtheorem*{claim*}{Claim}
\newcommand{\nrm}[1]{\|#1\|}
\newcommand{\bignrm}[1]{\Big\|#1\Big\|}
\newcommand{\N}{{\mathbb N}}
\newcommand{\prucl}{\prue[Proof of Claim:]}
\newcommand{\fprucl}{\fprue}
\newcommand{\prue}{\begin{proof}}
\newcommand{\fprue}{\end{proof}}
\newcommand{\conj}[2]{ \{ {#1}\,:\,{#2} \} }
\begin{document}
\title{Banach lattice versions of strict singularity}

\author[J. Flores]{J. Flores}
\address{J. Flores\\ Department of Applied Mathematics, Escet, Universidad Rey Juan Carlos, 28933, M\'ostoles, Madrid, Spain.}
\email{julio.flores@urjc.es}

\author[J. L\'opez-Abad]{J. L\'opez-Abad}
\address{J. L\'opez-Abad \\ Instituto de Ciencias Matematicas (ICMAT).  CSIC-UAM-UC3M-UCM. C/ Nicol\'{a}s Cabrera 13-15, Campus Cantoblanco, UAM
28049 Madrid, Spain} \email{abad@icmat.es}

\author[P. Tradacete]{P. Tradacete}
\address{P. Tradacete \\ Mathematics Department, Universidad Carlos III de Madrid, 28911 Legan\'es, Madrid, Spain.}
\email{ptradace@math.uc3m.es}

\thanks{Research supported by the Spanish Government Grant MTM2012-31286. Third author also supported by Grant MTM2010-14946.}

\subjclass[2000]{46B42, 47B60}
\keywords{Disjointly strictly singular operator, lattice strictly singular operator, Banach lattice, unconditional sequence}

\begin{abstract}
We explore the relation between lattice versions of strict singularity for operators from a Banach lattice to a Banach space. In particular, we study when the class of disjointly strictly singular operators, those not invertible on the span of any disjoint sequence, coincides with that of lattice strictly singular operators, i.e. those not invertible on any (infinite dimensional) sublattice. New results are given which help to clarify the existing relation between these two classes. 
\end{abstract}

\maketitle

\thispagestyle{empty}

\section{Introduction}

Recall that a linear operator between Banach spaces $T:X\rightarrow Y$ is strictly singular if it is not an isomorphism when restricted to any infinite dimensional subspace. These operators form a two-sided operator ideal, and were introduced by T. Kato \cite{Kato} in connection with the perturbation theory of Fredholm operators. More recently, this class has played an important role in the theory of hereditarily indecompossable spaces.

In this note we explore the relation between lattice versions of strict singularity. Given a Banach lattice $E$ and a Banach space $X$, a bounded operator $T:E\rightarrow X$ is called \emph{disjointly strictly singular} (DSS in short) if there is no infinite sequence $(x_n)_n$ of pairwise
disjoint vectors in $E$ such that the restriction of $T$ to the  span $[x_n]$ is an isomorphism. The class of DSS operators was introduced in \cite{Hernandez-Salinas} in connection with the structure of $\ell_p$ subspaces in Orlicz spaces. Recently, it proved to be a useful tool in the study of strictly singular operators on Banach lattices (cf. \cite{Flores-Hernandez-Kalton-Tradacete}).

Also, $T:E\rightarrow X$ is called \emph{lattice strictly singular} (LSS) if for every infinite-dimensional closed
sublattice $S\subset E$  the restriction $T|_S$ is not an isomorphism. Notice that $T:E\rightarrow X$ is LSS if, and only if, there is no infinite sequence $(x_n)_n$ of \emph{positive}, pairwise
disjoint  vectors in $E$ such that the restriction of $T$ to the span $[x_n]$ is an isomorphism. This follows from the fact that the linear space generated by a sequence of positive disjoint vectors is always a
sublattice, and that every infinite-dimensional sublattice contains a sequence of positive, disjoint vectors.
Note that the linear span of a sequence of (non-positive) disjoint vectors is not a sublattice in general.

The classes of LSS and DSS operators contain all strictly singular operators. An example of a DSS (and LSS) operator which is not strictly singular is the formal identity $i:L_p(\mu)\rightarrow L_q(\mu)$ for $p>q$ and $\mu$ a finite measure: It is easy to see that this operator cannot be an isomorphism on any subspace isomorphic to $\ell_p$, but due to Khintchine's inequality it is an isomorphism on a subspace isomorphic to $\ell_2$.

Clearly, if an operator $T$ is DSS then it is LSS. The main question is whether the converse holds:

\begin{question}\label{mainquestion}
Is every LSS operator also DSS?
\end{question}

A motivation for this question can be traced back to the following observation in
\cite{GJ}: If  an operator $T:E\rightarrow X$ is invertible in a subspace isomorphic to $c_0$ generated by a
disjoint sequence, then $T$ is also invertible in a sublattice isomorphic to $c_0$. In the current
terminology, this is equivalent to saying that, given a compact Hausdorff space $K$ and a Banach space $X$,
every operator $T:C(K)\rightarrow X$ is LSS if and only if it is DSS.

A related question is whether the class of LSS operators forms a subspace of the space of bounded operators between a Banach lattice and a Banach space. More precisely,
\begin{question}\label{questionsum}
Given two LSS operators $T_1,T_2:E \rightarrow X$, is the sum $T_1+T_2$ also LSS?
\end{question}

Notice that the usual proof that shows that the sum of SS operators is SS still works in the class of DSS operators, but it does not in the class of LSS operators. Therefore, a positive answer to Question \ref{mainquestion} would yield an affirmative answer to Question \ref{questionsum}. Surprisingly enough, as will be shown in Theorem \ref{equivalence}, these two questions turn out to be equivalent.

Notice that the class of DSS operators is a left ideal (but not a two-sided ideal). Similarly, the composition of an LSS operator with a bounded operator (from the left) is clearly LSS.

The research on Question \ref{mainquestion} was initiated by the first-named author in \cite{Flores, FH1}, where several partial results for regular operators were given. Recall that an operator between Banach lattices is called regular if it can be written as a difference of two positive operators. Let us summarize these results (\cite[Prop. 4.2]{Flores}, \cite[Prop. 2.4]{FH1}):

\begin{theorem}
Let $T:E\rightarrow F$ be a regular LSS operator between Banach lattices. If any of the following conditions hold, then $T$ is DSS:
\begin{enumerate}
\item $E$ is an L-space and $F$ is a KB-space.
\item $E=L_p(\mu)$ and $F=L_q(\mu)$, $1\leq p\leq \infty$, $1\leq q <\infty$.
\item $T$ is positive and $F$ has order continuous norm.
\end{enumerate}
\end{theorem}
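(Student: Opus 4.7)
The plan is to argue by contradiction in each case: assume $T$ is LSS but not DSS, pick a normalized disjoint sequence $(x_n)_n \subset E$ with $\inf_n \|Tx_n\| \geq c > 0$, and extract from $(x_n)_n$ a \emph{positive} disjoint sequence (built from the $x_n^\pm$ or from $|x_n|$) whose closed linear span is a sublattice on which $T$ is still bounded below, contradicting LSS. Since $T$ is regular we write $T = T_1 - T_2$ with $T_i \geq 0$, and decompose $x_n = x_n^+ - x_n^-$. The inequality $|Tx_n| \leq (T_1+T_2)|x_n|$ then gives $\|(T_1+T_2)|x_n|\| \geq c$, so each of the candidate positive disjoint sequences $(|x_n|)_n$, $(x_n^+)_n$, $(x_n^-)_n$ inherits at least part of this pointwise lower bound.

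For (3), positivity of $T$ yields the cleaner inequality $|Tx_n| \leq T|x_n|$, so $\|T|x_n|\| \geq c$. Setting $y_n = |x_n|$, the crucial step is to upgrade this pointwise lower bound to $\|T(\sum_n a_n y_n)\| \geq c'\|\sum_n a_n y_n\|$ for all scalars $a_n$. Here I would exploit the order continuity of the norm of $F$: by the Kadec--Pe\l{}czy\'nski disjointification principle, after passing to a subsequence one can write $T y_{n_k} = d_k + r_k$ with $(d_k)_k$ pairwise disjoint in $F$ and $\|r_k\|$ small/summable, so a standard perturbation argument combined with the unconditionality of disjoint sequences provides the required lower bound. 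For (1), the L-space structure of $E$ forces $\|x_n^+\| + \|x_n^-\| = 1$, so after passing to a subsequence one of $(x_n^\pm)_n$ has norms bounded below by $1/2$; a positive disjoint sequence in an L-space is $\ell_1$-equivalent, and the fact that the KB-space $F$ contains no copy of $c_0$ is used to rule out cancellations between $T_1 x_n^\pm$ and $T_2 x_n^\pm$ to yield the desired lower bound on a sublattice. For (2), one uses the explicit identification of normalized disjoint sequences in $L_p$ with the $\ell_p$-basis and Khintchine's inequality, together with a Kadec--Pe\l{}czy\'nski extraction in $L_q$, to perform the same reduction.

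The main technical obstacle, common to all three cases, is identical: given a pointwise bound $\|Ty_n\| \geq c$ on a positive disjoint sequence $(y_n)_n$, one has no a priori control over cancellations in the images $Ty_n$ (which are typically not disjoint in $F$), so it is not immediate that $T$ is bounded below on linear combinations $\sum_n a_n y_n$. Each hypothesis of the theorem supplies exactly the tool required to overcome this: order continuity of $F$ combined with disjointification in (3), the rigidity of L-spaces paired with the absence of $c_0$-copies in KB-spaces in (1), and the concrete geometry of $L_p$--$L_q$ in (2). I expect the order-continuous case to admit the cleanest argument, while the L-/KB-space case will require the most delicate block-selection manipulation.
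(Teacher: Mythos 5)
First, a point of context: the paper does not prove this theorem at all --- it is stated as a summary of results imported from \cite{Flores} and \cite{FH1} --- so there is no internal proof to compare yours against. Judged on its own merits, your proposal has the right overall shape (argue by contradiction and try to transfer invertibility from the span of the disjoint sequence $(x_n)$ to the sublattice generated by $(|x_n|)$ or $(x_n^{\pm})$), but the step you yourself flag as ``the main technical obstacle'' is precisely the entire content of the theorem, and none of the mechanisms you propose actually closes it. Two concrete problems. First, your use of regularity delivers nothing: from $|Tx_n|\le (T_1+T_2)|x_n|$ you obtain $\|(T_1+T_2)|x_n|\|\ge c$, which is a lower bound for the auxiliary positive operator $T_1+T_2$, not for $T$; it is entirely possible that $T|x_n|=0$ while $(T_1+T_2)|x_n|$ is large, so the candidates $(|x_n|)$, $(x_n^{\pm})$ do not ``inherit'' any lower bound for $T$ from that inequality. (The bound $\max\{\|Tx_n^+\|,\|Tx_n^-\|\}\ge c/2$ does hold, but it comes from the triangle inequality, not from regularity.) Second, and more seriously, in case (3) the Kade\v{c}--Pe\l czy\'nski step only tells you that a subsequence of $(Ty_{n_k})$ is equivalent to a disjoint, hence unconditional, seminormalized basic sequence in $F$; that gives \emph{no} comparison between $\|\sum_k a_k Ty_{n_k}\|_F$ and $\|\sum_k a_k y_{n_k}\|_E$ (the images could be $c_0$-like while the $y_{n_k}$ are $\ell_1$-like), and you never treat the other branch of the dichotomy. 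Note also that a pointwise bound $\|Ty_n\|\ge c$ on a positive disjoint sequence is by itself compatible with $T$ being strictly singular (consider the inclusion $\ell_1\hookrightarrow c_0$ on the unit vector basis), so ``upgrading'' it cannot be a routine perturbation argument; it must use the full invertibility of $T$ on $[x_n]$, which your sketch never exploits after the first line.

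To see what a complete argument of this type looks like, case (1) is instructive and is essentially Proposition~\ref{c0l1}(2) of the paper: in an L-space every normalized disjoint sequence spans $\ell_1$ isometrically, so $(Tx_n)\sim(x_n)$ is equivalent to the $\ell_1$-basis; apply Rosenthal's dichotomy to the bounded sequences $(Tx_n^+)$ and $(Tx_n^-)$; if both had weakly Cauchy subsequences then, $F$ being a KB-space and hence weakly sequentially complete, $(Tx_n)$ would have a weakly convergent subsequence, contradicting its $\ell_1$-behaviour; hence one of them has an $\ell_1$-subsequence, and since $\|\sum_n a_n x_n^{\pm}\|\le\sum_n|a_n|$ in an L-space, $T$ is invertible on the sublattice $[x_n^+]$ or $[x_n^-]$, contradicting LSS. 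Your sketch names some of these ingredients but executes none of them, and in particular never produces the two-sided estimate on a sublattice that a contradiction with LSS requires. For (2) and (3) the analogous bookkeeping is carried out in the paper's own machinery (Lemma~\ref{LSS}, Theorem~\ref{main}, Theorem~\ref{trivial type}): one needs the equivalences $(y_n)\sim(y_n^+)\sim(y_n^-)$ for all positive blocks, seminormalizedness of the images of positive blocks, and an unconditionality statement for the image sequence in order to convert non-vanishing of $\|Tw_n\|$ along positive blocks into genuine invertibility on a sublattice. As it stands, the proposal is a plausible plan rather than a proof.
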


%

Further developments on this question have been done in \cite{FHR}. By means of ultraproduct techniques in combination with Krivine's finite representability Theorem and Brunel-Sucheston's Theorem  on extraction of good sequences, it can be shown that the classes of LSS and DSS operators coincide on a local level. To be more precise, recall that, given a Banach lattice $E$ and a Banach space $X$, an operator $T:E\rightarrow X$  is \emph{super-DSS} (respectively \emph{super-LSS})
if for every $\varepsilon>0$ there exist $N\in\mathbb{N}$ such that for each sequence $(x_n)_{n=1}^N$ of
disjoint elements in $E$ (resp. positive disjoint), there is $x\in[x_n]_{n=1}^N$ with
$\|Tx\|<\varepsilon\|x\|$. It is clear that every super-DSS operator is DSS, and that every super-LSS operator is LSS. Actually, if an
operator $T:E\rightarrow X$ is super-DSS (respectively, super-LSS), then for every free ultrafilter
$\mathcal{U}$, the extension $T_{\mathcal{U}}:E_{\mathcal{U}}\rightarrow X_{\mathcal{U}}$ is DSS (resp. LSS). It was shown in \cite{FHR} that an operator $T$ is super-LSS if and only if it is super-DSS. 

As a consequence, the following was also proved:

\begin{theorem}\label{stable}
Let $E$ be a stable Banach lattice and let $X$ be a stable Banach space with an unconditional finite dimensional decomposition. An operator $T:E\to X$  is LSS if and only if it is DSS.
\end{theorem}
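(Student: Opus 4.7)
The direction DSS $\Rightarrow$ LSS is immediate, so I focus on the converse. Suppose, toward a contradiction, that $T:E\to X$ is LSS but not DSS. Then there exist a normalized disjoint sequence $(x_n)\subset E$ and a constant $c>0$ such that
\[
\Bigl\|T\sum_n a_n x_n\Bigr\|\ge c\,\Bigl\|\sum_n a_n x_n\Bigr\|\qquad\text{for every scalar sequence }(a_n).
\]
The plan is to pit this against the positive disjoint sequence $(|x_n|)$: since $T$ is LSS, $T$ cannot be an isomorphism on $[|x_n|]$, so given any $\varepsilon>0$ there is $u=\sum_n a_n|x_n|$ with $\|u\|=1$ and $\|Tu\|<\varepsilon$.

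Writing $x_n=x_n^+-x_n^-$, all the $x_n^{\pm}$ are pairwise disjoint, and the lattice identity $|\sum_n a_n x_n|=\sum_n|a_n||x_n|=|u|$ yields $\|\sum_n a_n x_n\|=\|u\|=1$. Comparing
\[
T\Bigl(\sum_n a_n x_n\Bigr)=\sum_n a_n(Tx_n^+-Tx_n^-)\quad\text{with}\quad Tu=\sum_n a_n(Tx_n^++Tx_n^-),
\]
one sees that the two expressions differ only by sign flips on the $Tx_n^-$-coordinates. Therefore it will suffice to arrange, by passing to a subsequence of $(x_n)$, that the interleaved sequence $(Tx_1^+,Tx_1^-,Tx_2^+,Tx_2^-,\ldots)$ is unconditional in $X$ with some constant $K$. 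Granted this, unconditionality forces $\|T(\sum_n a_n x_n)\|\le K\|Tu\|<K\varepsilon$, and then the isomorphism inequality gives $\|\sum_n a_n x_n\|\le K\varepsilon/c$, contradicting $\|\sum_n a_n x_n\|=1$ once $\varepsilon<c/K$.

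The technical heart of the plan is therefore the extraction step, and this is where the stability hypotheses and the unconditional FDD of $X$ come in. First I would pass to a subsequence along which each of $(\|Tx_n^+\|)$ and $(\|Tx_n^-\|)$ is either seminormalized or converges to zero; the latter case lets me absorb the corresponding half of the interleaved sequence into a small perturbation. In the main, non-degenerate case, Krivine--Maurey in the stable space $X$ provides an $\ell_p$-spreading subsequence of the interleaved sequence, and a Bessaga--Pelczynski gliding hump against the unconditional FDD $(X_j)$ of $X$ yields a further subsequence that is close to a block sequence of $(X_j)$, hence unconditional with constant depending only on the FDD. Stability of the lattice $E$ plays the parallel role on the domain side, ensuring that the subsequence of $(x_n)$ retains its spreading-model behaviour so that the two extractions can be performed coherently. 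This simultaneous unconditionalization of $(Tx_n^\pm)$ will be the hard part; once it is in hand, the proof closes via the short norm comparison described above.
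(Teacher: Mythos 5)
Your reduction is clean and the final norm comparison is correct: if one could pass to a subsequence along which the interleaved sequence $(Tx_1^+,Tx_1^-,Tx_2^+,Tx_2^-,\dots)$ is unconditional with constant $K$, then indeed $\|T(\sum_n a_nx_n)\|\le K\|Tu\|$ for $u=\sum_n a_n|x_n|$, and testing against almost-null normalized vectors of the sublattice $[\,|x_n|\,]$ (which LSS provides) gives the contradiction. The problem is that this extraction step is not a technicality to be dispatched by Krivine--Maurey plus a gliding hump --- it is essentially the entire difficulty of the problem (it is, in effect, the \textsl{UDP}-type statement of Definition \ref{def UDP}), and the tools you invoke do not deliver it. Krivine--Maurey/stability gives an $\ell_p$ \emph{spreading model}, i.e.\ unconditional behaviour only for widely spread finite subsets; a sequence can generate an unconditional (even $\ell_p$) spreading model while having no unconditional subsequence at all. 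The gliding hump against the unconditional FDD is worse than useless here: it places $Tx_{n_k}^+$ and $Tx_{n_k}^-$ (essentially) in the \emph{same} block $\bigoplus_{j=m_k}^{m_{k+1}-1}X_j$, and the unconditionality of the FDD only allows sign changes block by block --- it gives no control whatsoever over the relative sign of $Tx_{n_k}^-$ against $Tx_{n_k}^+$ inside a common block, which is precisely the sign flip your argument needs. If interleaved unconditionalization were always possible in a space with an unconditional FDD, Question \ref{mainquestion} would be settled for all such range spaces by your three-line computation, with no role for stability; the case analysis in Theorem \ref{main} exists exactly because it is not.

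For comparison: the paper does not prove Theorem \ref{stable} directly --- it is quoted from \cite{FHR}, where it is deduced from the equivalence super-LSS $=$ super-DSS via ultraproducts, Krivine's theorem and Brunel--Sucheston --- and it is superseded by Theorem \ref{main}, whose proof sidesteps the interleaving issue entirely. There one only unconditionalizes the single sequence $(Tx_n^+)$ (via the \textsl{USP}, which an unconditional FDD implies), and then runs a dichotomy: either $(Tx_n^+)\sim(x_n^+)$, contradicting LSS on the sublattice $[x_n^+]$, or there are positive blocks $y_n$ with $\|Ty_n^+\|\to 0$, whence by Lemma \ref{LSS} the operator is invertible on the sublattice $[y_n^-]$, again a contradiction. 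If you want a correct proof of the statement as given, that is the route to take; your plan as written has a genuine gap at its central step.
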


In this note, we show in particular that, the stability hypotheses on $E$ and $X$ can be dropped (see Theorem \ref{main}). The same kind of result will be proved when the range space $X$ is a Banach lattice with non-trivial type, or if $E$ has non-trivial type and $X$ is an order continuous Banach lattice (Corollary \ref{nontrivialtype}).

Moreover, it is shown that the classes of LSS and DSS operators also coincide for operators into stable order continuous Banach lattices. In particular, this holds for operators into spaces of the form $L_1(\mu)$ (Theorem \ref{L1}), which fail to embed in a space with an unconditional basis, and therefore are not covered by our main result (Theorem \ref{main}).

The main difficulty lying under these results is based on the following general question: {\sl suppose that $(x_n)$ is a sequence in a Banach space which can be written as $x_n=y_n+z_n$; what can be said about the properties of $(y_n)$ and $(z_n)$ in terms of those of $(x_n)$?} In particular, regarding the existence of LSS non-DSS operators, we are lead to the problem of finding a certain kind of unconditional blocks in decompositions of an unconditional sequence $(x_n)$. Thus, motivated by Theorem \ref{equivalence}, we will introduce the class of Banach spaces with the \textsl{Unconditional Decomposition Property}, which provide a characterization of when DSS and LSS operators coincide (see Definition \ref{def UDP} and the comments after it).

Throughout we will make repeated use of the classical Kade\v{c}-Pe\l czy\'nski's Dichotomy \cite{Kadec-Pelczynski}: Given an order continuous Banach lattice  $E$, let $i:E\hookrightarrow L_1(\mu)$ denote the canonical inclusion given by \cite[Theorem 1.b.14]{LT2}; for every sequence $(x_n)$ in $E$, either
\begin{enumerate}
\item[(i)] there is $C>0$ such that $\|ix_n\|_1\leq \|x_n\|_E\leq C\|ix_n\|_1$, for every $n\in \mathbb N$; or,
\item[(ii)] there is a disjoint sequence $(y_k)$ in $E$, such that $\|x_{n_k}-y_k\|_E\rightarrow0$.
\end{enumerate}

Our notation is standard: the expression $A\lesssim B$ corresponds to the existence of a constant $C>0$ independent of the parameters involved in $A$ and $B$ such that $A\leq C B$. Also $A\sim B$ will denote $A\lesssim B$ and $B\lesssim A$. Given an element of a Banach lattice $x\in E$, we will consider its positive and negative parts: $x^+=x\vee 0$, $x^-=(-x)\vee 0$ (note that $x=x^+-x^-$ and $|x|=x^++x^-$). The reader is referred to \cite{M-N, LT2} for unexplained notions in Banach lattice theory.

\section{Main results}

Let us start recalling some basic preliminary facts.

\begin{proposition}\label{c0l1}
Let $E$ be a Banach lattice, $X$ a Banach space and $T:E\rightarrow X$ a bounded linear operator.\begin{enumerate}
\item If $T$ is invertible in a subspace isomorphic to $c_0$ generated by a disjoint sequence, then $T$ is also invertible in a sublattice isomorphic to $c_0$.
\item If $T$ is invertible in a subspace isomorphic to $\ell_1$ generated by a disjoint sequence, then $T$ is also invertible in a sublattice isomorphic to $\ell_1$.
\end{enumerate}
\end{proposition}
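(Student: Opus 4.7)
The plan is to extract from $(x_n)$ a positive disjoint subsequence spanning a sublattice of the appropriate isomorphism type and on which $T$ remains bounded below. Normalize $\|x_n\|=1$ and set $c:=\inf_n\|Tx_n\|>0$. Writing $x_n=x_n^+-x_n^-$ and using disjointness to obtain $|x_n|\wedge|x_m|=0$ for $n\neq m$, the family $\{x_n^+,x_n^-:n\in\N\}$ is pairwise disjoint in the positive cone. From $\|Tx_n^+\|+\|Tx_n^-\|\geq\|Tx_n\|\geq c$, a pigeonhole lets us pass to a subsequence where (WLOG) $\|Tx_n^+\|\geq c/2$, and setting $y_n:=x_n^+$ yields a positive disjoint sequence with $c/(2\|T\|)\leq\|y_n\|\leq1$ whose closed span is automatically a sublattice of $E$.

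To identify the type of $[y_n]$, I would use $\|\sum a_ny_n\|=\|\sum|a_n|y_n\|$ (positive disjointness) together with the domination $\sum|a_n|y_n\leq\sum|a_n||x_n|$ and the fact that $(|x_n|)$ inherits the $c_0$- (resp.\ $\ell_1$-) basis property of $(x_n)$, since disjointness of $(x_n)$ gives $\|\sum a_nx_n\|=\|\sum|a_n||x_n|\|=\|\sum a_n|x_n|\|$. In the $c_0$ case, the matching lower bound $\|\sum a_ny_n\|\geq|a_{n_0}|\|y_{n_0}\|\gtrsim\max|a_n|$ is immediate from disjoint positivity; in the $\ell_1$ case, the lower bound will fall out of the bounded-below property of $T$.

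The main obstacle is proving that $T$ remains bounded below on $[y_n]$. For Part (1), since $(y_n)\sim c_0$-basis, the series $\sum y_n$ is weakly unconditionally Cauchy in $E$, and hence so is $\sum Ty_n$ in $X$ (via $T^*$). Combined with $\|Ty_n\|\geq c/2$, the Bessaga--Pe\l czy\'nski selection principle yields a subsequence of $(Ty_n)$ equivalent to the $c_0$-basis; restricting to it gives $\|T\sum a_ny_n\|\sim\max|a_n|\sim\|\sum a_ny_n\|$, making $T|_{[y_n]}$ an isomorphism.

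For Part (2), I would apply Rosenthal's $\ell_1$ theorem to $(Tx_n^+)$: a subsequence is either equivalent to the $\ell_1$-basis (and then the $\ell_1$-upper bound on $\|\sum a_ny_n\|$ combined with $\|T\sum a_ny_n\|\sim\sum|a_n|$ forces $(y_n)\sim\ell_1$-basis and $T$ bounded below), or weakly Cauchy. In the latter case, a second application of Rosenthal to $(Tx_n^-)$ gives either an $\ell_1$-basis subsequence (then argue analogously with $(x_n^-)$ in place of $(y_n)$) or a weakly Cauchy one, but then $(Tx_n)=(Tx_n^+)-(Tx_n^-)$ would be weakly Cauchy on a common subsequence, contradicting its equivalence to the $\ell_1$-basis, which admits no weakly Cauchy subsequence. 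The crucial step throughout is this selection-principle argument, which upgrades the pointwise bound $\|Ty_n\|\geq c/2$ into a uniform equivalence of $(Ty_n)$ with a canonical basis.
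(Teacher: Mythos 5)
Your argument is correct and follows essentially the same route as the paper: split each $x_n$ into $x_n^+$ and $x_n^-$, pass to a subsequence on which one of $\|Tx_n^{\pm}\|$ is bounded below, use the $c_0$-upper estimate together with a weak-nullness/Bessaga--Pe\l czy\'nski extraction for part (1), and apply Rosenthal's $\ell_1$-dichotomy to $(Tx_n^+)$ and $(Tx_n^-)$ for part (2). The one step you leave implicit --- that a normalized disjoint sequence spanning a copy of $c_0$ or $\ell_1$ is automatically equivalent to the unit vector basis, by uniqueness of the unconditional basis in these spaces --- is exactly the step the paper invokes explicitly in case (2), so nothing essential is missing.
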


\begin{proof}
(1): Suppose that $(f_n)$ is a disjoint sequence in $E$ such that $(Tf_n)\sim (f_n)\sim (u_n)_n\text{ in
$c_0$}$. Then there is $\varepsilon\in \{+,-\}$  and an infinite subsequence $(f_n^{\varepsilon})_{n\in M}$
such that $\nrm{Tf_n^\varepsilon}\ge 1/2\nrm{Tf_n}$. Now, given $(a_n)_{n\in M}$ we have that
\begin{align*}
\nrm{\sum_{n\in M}a_nTf_n^\varepsilon}\lesssim\nrm{\sum_{n\in M}a_n f_n^\varepsilon}\lesssim \nrm{\sum_{n\in M}|a_n| |f_n|}
\lesssim\max_{n\in M}|a_n|
\end{align*}
Hence, $(Tf_n^\varepsilon)_{n\in M}$ is a non-trivial weakly-null sequence. It follows from Mazur's Lemma
that there is a basic subsequence $(Tf_n^\varepsilon)_{n\in N}$, hence $(Tf_n^\varepsilon)_{n\in N}$ is
equivalent to the unit vector basis of $c_0$.

(2): Suppose $T$ is invertible on $[x_n]$ where $(x_n)$ are normalized disjoint vectors, and $[x_n]$ is
isomorphic to $\ell_1$. By the uniqueness of unconditional basis of $\ell_1$, it follows that $(x_n)$ must be
equivalent to the unit vector basis of $\ell_1$. Now apply Rosenthal's $\ell_1$-dichotomy to the bounded
sequences $(Tx_n^+)$ and $(Tx_n^-)$. The result follows.
\end{proof}

We will show now that the stability assumptions in Theorem \ref{stable} can be actually dropped. Before, we need the following:

\begin{lemma}\label{LSS}
Let $E$ be a Banach lattice, $X$ a Banach space, and $T:E\to X$ an LSS operator. Suppose that $(x_n)$ is a
normalized sequence of disjoint vectors in $E$ such that $(Tx_n)\sim(x_n)$. Then:
\begin{enumerate}

\item[(a)] Every normalized block sequence of $(x_n)$ with positive coefficients $a_j\geq0$,
$$
y_n=\sum_{j\in A_n} a_j x_j
$$
satisfies
$$
(y_n)\sim(\sum_{j\in A_n} a_j x_j^+)\sim (\sum_{j\in A_n} a_j x_j^-).
$$
\item[(b)] Let $(w_n)_n$ be a positive normalized block subsequence of either $(x_n^+)$ or of $(x_n^-)$. Then
$(T(w_n))$ is seminormalized.

\end{enumerate}
\end{lemma}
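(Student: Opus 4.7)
The plan is to prove both (a) and (b) by contradiction, combining the small perturbation principle for basic sequences with the LSS hypothesis: the contradiction in each case will be the production of a positive disjoint sequence on which $T$ is an isomorphism, which is forbidden since its closed linear span is an infinite-dimensional sublattice. Along the way I will use that $(y_n)$ is itself disjoint (so $(Ty_n)\sim(y_n)$, because $(y_n)\subset[x_n]$) and that $u_n=y_n^+$, $v_n=y_n^-$ are positive and pairwise disjoint with $|y_n|=u_n+v_n$.

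For (a), I focus on $(y_n)\sim(u_n)$; the equivalence $(y_n)\sim(v_n)$ is symmetric. The inequality $\|\sum b_n u_n\|\leq\|\sum b_n y_n\|$ is immediate from the lattice identity $|\sum b_n y_n|=\sum|b_n|(u_n+v_n)$ and positivity of $u_n$. Assuming the reverse fails, I extract for each $k$ coefficients $b_n^{(k)}\geq 0$ (using $1$-unconditionality to drop signs) with $\|\sum_n b_n^{(k)} y_n\|=1$ and $\|\sum_n b_n^{(k)} u_n\|\leq 2^{-k}$; after thinning, the finite supports of $(b_n^{(k)})_n$ are pairwise disjoint. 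Put $\xi_k=\sum_n b_n^{(k)}u_n$, $\eta_k=\sum_n b_n^{(k)}v_n$, $\chi_k=\xi_k-\eta_k$, so $(\xi_k),(\eta_k)$ are positive disjoint in $E$, $(\chi_k)$ is disjoint with $\|\chi_k\|=1$, $\|\xi_k\|\leq 2^{-k}$ and $\|\eta_k\|\in[1-2^{-k},1]$. Since $(-\eta_k)$ is $1$-unconditional and seminormalized, the small perturbation principle gives $(\chi_k)\sim(-\eta_k)\sim(\eta_k)$. The inclusion $\chi_k\in[x_n]$ combined with $(Tx_n)\sim(x_n)$ yields $(T\chi_k)\sim(\chi_k)$, and a second perturbation on the $X$-side (using $\|T\xi_k\|\leq\|T\|2^{-k}$) upgrades this to $(T\chi_k)\sim(-T\eta_k)$. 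Combining, $(T\eta_k)\sim(\eta_k)$, so $T$ is an isomorphism on the sublattice $[\eta_k]$, contradicting LSS.

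For (b), suppose $\liminf_n\|Tw_n\|=0$. Passing to a subsequence (still a positive normalized block of $(x_n^+)$), assume $\|Tw_n\|\leq 2^{-n}$. Writing $w_n=\sum_{j\in A_n}c_j x_j^+$, set $v_n=\sum_{j\in A_n}c_jx_j^-$ and $\tilde y_n=w_n-v_n=\sum_{j\in A_n}c_jx_j\in[x_n]$, so $\tilde y_n^+=w_n$ and $\tilde y_n^-=v_n$. Applying (a) to the normalized block $\tilde y_n/\|\tilde y_n\|$ of $(x_n)$ yields $(\tilde y_n)\sim(w_n)\sim(v_n)$; in particular $\|\tilde y_n\|,\|v_n\|\sim 1$. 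The isomorphism of $T$ on $[x_n]$ then gives $(T\tilde y_n)\sim(\tilde y_n)\sim(v_n)$. Since $Tv_n=Tw_n-T\tilde y_n$, the difference $\|Tv_n-(-T\tilde y_n)\|=\|Tw_n\|\leq 2^{-n}$ is summable, so a final perturbation yields $(Tv_n)\sim(-T\tilde y_n)\sim(v_n)$. Hence $T$ is an isomorphism on the sublattice $[v_n]$, again contradicting LSS.

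The main obstacle I anticipate is purely quantitative: each application of the small perturbation principle requires the total perturbation to be small relative to the basis constant and the biorthogonal functionals of the reference basic sequence, and these latter are controlled by the equivalence constants (from (a)) and by $\|T\|$. This is not a serious difficulty: in (a) one simply chooses each $\|\xi_k\|$ smaller than a rapidly decaying benchmark depending on the constants already fixed, and likewise in (b) one makes $\|Tw_n\|$ decay fast enough to absorb the constants produced by part (a).
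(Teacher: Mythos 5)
Your argument is correct and follows essentially the same route as the paper's: in both parts you use the free inequality $\|\sum b_n y_n^+\|\le\|\sum b_n y_n\|$ together with unconditionality of disjoint sequences to reduce to positive coefficients, and then the small perturbation principle to transfer the invertibility of $T$ from the blocks to their negative (resp.\ positive) parts, contradicting LSS on the resulting positive disjoint sequence. The only differences are notational and the (welcome) explicit attention to the perturbation constants, which the paper handles with the same implicit passage to subsequences.
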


\begin{proof}

(a): Let us consider a sequence $(x_n)$ of normalized disjoint vectors in $E$ such that
$$
(Tx_n)\sim(x_n).
$$
Note that both $(x_n)$ and $(Tx_n)$ are unconditional basic sequences.

Now given a normalized block sequence of positive coefficients
$$
y_n=\sum_{j\in A_n} a_j x_j
$$
with $a_j\geq0$, suppose  that $(y_n)\nsim(y_n^+)$. Then, for each $n\in\mathbb{N}$ there would exist $(b_j^n)_{j\in B_n}$ such that
$$
\Big\|\sum_{j\in B_n}b_j^n y_j\Big\|=1
$$
while
$$
\Big\|\sum_{j\in B_n}b_j^n y_j^+\Big\|<\frac1n \hspace{1cm}\textrm{or}\hspace{1cm}\Big\|\sum_{j\in B_n}b_j^n y_j^+\Big\|>n.
$$
Without loss of generality we can assume that the sets $B_n$ are finite and disjoint. Let $w_n=\sum_{j\in B_n}b_j^n y_j$. Notice that $(y_n)$ and $(y_n^+)$ are disjoint sequences, hence unconditional, so we can suppose that $b_j^n\geq0$ for every $n\in\mathbb{N}$ and $j\in B_n$. Hence,
$$
w_n^+=\sum_{j\in B_n}b_j^n y_j^+.
$$
In particular, we have that $\|w_n^+\|\leq \|w_n\|$, so we must have $\|w_n^+\|<1/n$. Therefore,
$$
\|w_n-w_n^-\|=\|w_n^+\|\rightarrow0
$$
and, since $T$ is bounded
$$
\|Tw_n-Tw_n^-\|\rightarrow0.
$$
Hence, since $T$ is an isomorphism on $[y_n]$, and using an argument of perturbation for basic sequences \cite[Proposition 1.a.9]{LT1}, up to further subsequences,  we have the equivalence
$$
(w_n^-)\sim(w_n)\sim(Tw_n)\sim (Tw_n^-)
$$
which yields that $T$ is invertible on the span of $w_n^-$ in contradiction with being LSS.

(b): Suppose otherwise that $(v_n)$ is a positive normalized block subsequence of, say, $(x_n^+)$ such that
$T(v_n)\to 0$. Write $v_n:=\sum_{k\in s_n}a_k x_k^+$, $a_k\ge 0$. Let $w_n:=\sum_{k\in s_n}a_k x_k^-$.
Then, using (a), and the fact that $(T(x_n))\sim (x_n)$,
\begin{align*}
(w_n)\sim&  (v_n-w_n)    \sim (T(v_n-w_n)) \sim (Tw_n),
\end{align*}
and this is impossible.
\end{proof}

Recall that a Banach space $X$ is said to have the unconditional subsequence property (\textsl{USP} in short, cf. \cite{OZ}) if every weakly null sequence has an unconditional subsequence. This is of course the case when $X$ has an unconditional finite dimensional decomposition. In fact, as shown in \cite{JZ}, a strengthening of the \textsl{USP}, namely, the unconditional tree property, is an equivalent condition, for reflexive spaces, to embed in a space with an unconditional basis.

\begin{theorem}\label{main}
Let $E$ be a Banach lattice and $X$ a Banach space with the \textsl{USP}. An operator $T:E\to X$ is LSS if and only if it is DSS.
\end{theorem}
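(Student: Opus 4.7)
The direction DSS $\Rightarrow$ LSS is trivial. For the converse, suppose $T$ is LSS but not DSS; fix a normalized disjoint sequence $(x_n)\subset E$ with $(Tx_n)\sim(x_n)$. By Proposition \ref{c0l1}, no block subsequence of $(x_n)$ is equivalent to the $c_0$ or $\ell_1$ basis, as otherwise $T$ would be invertible on a sublattice isomorphic to $c_0$ or $\ell_1$. James' dichotomy for unconditional basic sequences then forces $[x_n]$ to be reflexive, so $(x_n)\to 0$ weakly. Since disjointness of $(x_n)$ gives the isometry $\sum a_n x_n\leftrightarrow\sum a_n|x_n|$ between $[x_n]$ and $[|x_n|]$, and Lemma \ref{LSS}(a) identifies $[x_n^\pm]$ with $[x_n]$ up to equivalent norms, all four spaces $[x_n], [|x_n|], [x_n^+], [x_n^-]$ are reflexive and the corresponding sequences are weakly null in $E$.

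Apply USP to the weakly null sequence $(T|x_n|)\subset X$: after extraction $(T|x_n|)$ is an unconditional basic sequence, hence linearly independent, so $T|_{[|x_n|]}$ is injective. Since $T$ is LSS and $(|x_n|)$ is positive disjoint, $T|_{[|x_n|]}$ is not an isomorphism, hence not bounded below; pick unit vectors $y_k\in[|x_n|]$ with $Ty_k\to 0$. Reflexivity produces a weak cluster point $y\in\ker T|_{[|x_n|]}=\{0\}$, so along a subsequence $y_k\to 0$ weakly. A standard gliding-hump argument on the unconditional basis $(|x_n|)$ of the reflexive space $[|x_n|]$ then yields a normalized positive block sequence $z_k=\sum_{n\in s_k}c_n|x_n|$ with $c_n\geq 0$, $(s_k)$ pairwise disjoint finite, and $Tz_k\to 0$.

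Split $z_k=u_k+v_k$ into positive disjoint blocks $u_k=\sum_{n\in s_k}c_n x_n^+$ and $v_k=\sum_{n\in s_k}c_n x_n^-$, and set $w_k=u_k-v_k=\sum_{n\in s_k}c_n x_n\in[x_n]$; disjointness of $(x_n)$ gives $\|w_k\|=\|z_k\|=1$. The identities $Tw_k=Tu_k-Tv_k$ and $Tz_k=Tu_k+Tv_k\to 0$ yield $Tu_k-\tfrac12 Tw_k=\tfrac12 Tz_k\to 0$, and after passing to a sufficiently fast subsequence the perturbation principle \cite[Prop.~1.a.9]{LT1} applied to the basic sequence $(\tfrac12 Tw_k)\sim(w_k)$ (using that $T$ is an isomorphism on $[x_n]$) gives $(Tu_k)\sim(Tw_k)\sim(w_k)$. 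Lemma \ref{LSS}(a), applied with the positive coefficients $c_n$, produces $(w_k)\sim(u_k)$, so $T$ is an isomorphism on the span of the positive disjoint sequence $(u_k)$, contradicting LSS. The decisive use of USP is to guarantee $(T|x_n|)$ is basic: this injectivity is what allows LSS to be upgraded from "not an isomorphism" to "not bounded below with weakly null witnesses", which in turn makes the gliding-hump extraction of $(z_k)$ possible. The rest is bookkeeping built from Lemma \ref{LSS}(a) and the assumption that $T$ is an isomorphism on $[x_n]$.
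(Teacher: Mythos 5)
Your overall architecture is close to the paper's (which runs the same dichotomy on $(Tx_n^+)$ rather than on $(T|x_n|)$), but as written there is a genuine gap at the decisive step, and your closing paragraph shows you have misplaced where the \textsl{USP} actually enters. You claim the only role of the \textsl{USP} is to make $(T|x_n|)$ basic, hence $T|_{[|x_n|]}$ injective. But a seminormalized weakly null sequence always has a basic subsequence (Bessaga--Pe\l czy\'nski selection), with no hypothesis on $X$ whatsoever; if basicness were all that is needed, your argument would prove the theorem for \emph{every} Banach space $X$ and thereby answer Question \ref{mainquestion} affirmatively, which the paper leaves open. So unconditionality must be doing essential work somewhere else.

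That somewhere is the step ``a standard gliding-hump argument \dots\ yields a normalized \emph{positive} block sequence $z_k=\sum_{n\in s_k}c_n|x_n|$ with $c_n\ge 0$ and $Tz_k\to0$.'' The gliding hump applied to the weakly null unit vectors $y_k$ produces finitely supported blocks $z_k'=\sum_{n\in s_k}c_n|x_n|$ with $Tz_k'\to 0$, but the $c_n$ inherit the signs of the basis coefficients of the $y_k$ and there is no reason for them to be nonnegative. Replacing $c_n$ by $|c_n|$ preserves $\|z_k'\|$ (by disjointness of $(|x_n|)$) but \emph{not} the smallness of $Tz_k'$ --- unless $(T|x_n|)$ is unconditional, in which case $\|\sum_{n\in s_k}|c_n|\,T|x_n|\|\lesssim\|\sum_{n\in s_k}c_n\,T|x_n|\|\to0$. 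This is precisely where the \textsl{USP} is indispensable, and it is the same pivot as in the paper's proof (there, unconditionality of $(Tx_n^+)$ is what allows the coefficients witnessing $(Tx_n^+)\nsim(x_n^+)$ to be taken nonnegative). The positivity of the $c_n$ is not cosmetic in your endgame: without it $u_k=\sum_{n\in s_k}c_nx_n^+$ is not a positive disjoint sequence, $\|w_k\|=\|z_k\|$ fails, and Lemma \ref{LSS}(a) does not apply. Two smaller omissions: before invoking the \textsl{USP} you must rule out $\|T|x_n|\|\to0$ (in that case $Tx_n^+\approx -Tx_n^-$, so $(Tx_n^+)\sim(Tx_n)\sim(x_n)\sim(x_n^+)$ and $T$ is already invertible on the sublattice $[x_n^+]$ --- harmless, but it must be said), and the same observation is what guarantees $(T|x_n|)$ is seminormalized so that the \textsl{USP} and the basic-sequence extraction can be applied at all. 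With these points inserted your proof becomes correct, and essentially a reorganization of the paper's.
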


\begin{proof}
Suppose $T$ is not DSS, then there is a disjoint normalized sequence $(x_n)$ satisfying $(Tx_n)\sim (x_n)$. By the previous Lemma, every normalized block sequence $(y_n)$ of $(x_n)$ with positive coefficients satisfies $$(y_n)\sim(y_n^+)\sim(y_n^-).$$
Since $\|x_n\|=1$, we have that both $(x_n^+)$ and $(x_n^-)$ are seminormalized. We will see that $T$ must be invertible on $[x_n^+]$ or $[x_n^-]$.

Indeed, suppose that $(Tx_n^+)\nsim(x_n^+)$. Since $X$ has the \textsl{USP}, up to a further subsequence, we can suppose that $(Tx_n^+)$ is a (seminormalized) unconditional basic sequence. Thus, for every $n\in \mathbb{N}$ there exist $b_j^n\geq0$ with $j\in S_n$ such that
$$
\|\sum_{j\in S_n}b_j^nx_j^+\|=1,
$$
while
$$
\|\sum_{j\in S_n}b_j^nTx_j^+\|<\frac1n, \hspace{1cm}\textrm{or}\hspace{1cm}\|\sum_{j\in S_n}b_j^nTx_j^+\|>n.
$$
Since $T$ is bounded, $\|\sum_{j\in S_n}b_j^nTx_j^+\|>n$ can only hold for finitely many $n$. Therefore, $\|\sum_{j\in S_n}b_j^nTx_j^+\|\rightarrow0$. Hence, there is a subsequence of the block sequence $y_n=\sum_{j\in S_n}b_j^nx_j$ such that
$$
(Ty_n)\sim(Ty_n^-).$$
Since $(y_n)$ has positive coefficients we also have the equivalence $(y_n)\sim(y_n^-)$. Thus, $T$ is invertible on the span of $(y_n^-)$.
\end{proof}

Let us see now that Questions \ref{mainquestion} and \ref{questionsum} are actually equivalent. First, recall that the sum of DSS operators is always DSS. Actually, we have the following:

\begin{lemma}\label{iojio34uori4jffrd}
The sum of an LSS operator with a DSS operator is an LSS operator.
\end{lemma}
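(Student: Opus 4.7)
The plan is to argue by contradiction, assuming $T_1+T_2$ is not LSS. This gives a normalized positive disjoint sequence $(x_n)$ in $E$ and a constant $c>0$ with $\|(T_1+T_2)u\|\geq c\|u\|$ for every $u\in[x_n]$. The strategy is to exploit the LSS hypothesis on $T_1$ to extract a disjoint block sequence of $(x_n)$ on which $T_1$ is negligible, and then read off that $T_2$ is invertible there, contradicting the DSS hypothesis on $T_2$.

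First I would extract a normalized block sequence $(v_k)$ of $(x_n)$ with $\|T_1 v_k\|\to 0$. The key observation is that for each $M$, the tail span $[x_n]_{n>M}$ is still an infinite-dimensional sublattice of $E$, since $(x_n)$ is positive and disjoint. Because $T_1$ is LSS, its restriction to this sublattice fails to be bounded below, so there is a normalized element on which $T_1$ is as small as we wish. Approximating that element by a finite linear combination of $(x_n)_{n>M}$ and iterating on successive tails, I obtain disjoint finite intervals $F_1<F_2<\cdots$ of $\N$ and normalized blocks $v_k=\sum_{n\in F_k}a_n^{(k)}x_n$ with $\|T_1v_k\|<1/2^k$. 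Being blocks of a disjoint sequence, the $v_k$ are themselves pairwise disjoint in $E$ and form a $1$-unconditional basic sequence.

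Next, since $[v_k]\subseteq[x_n]$ the lower bound $\|(T_1+T_2)v_k\|\geq c$ persists, and combined with $T_1v_k\to 0$ this forces $T_2v_k$ to be seminormalized. Writing $T_2v_k=(T_1+T_2)v_k-T_1v_k$ and passing to a further subsequence with $\sum_k\|T_1v_k\|$ small enough compared with $c$, a standard perturbation of basic sequences argument based on the $1$-unconditionality of $(v_k)$ yields
\begin{equation*}
\Big\|\sum_k a_k T_2v_k\Big\|\sim\Big\|\sum_k a_k (T_1+T_2)v_k\Big\|\sim\Big\|\sum_k a_k v_k\Big\|.
\end{equation*}
Hence $T_2$ is an isomorphism on $[v_k]$. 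Since $(v_k)$ is a disjoint sequence in $E$, this contradicts $T_2$ being DSS and completes the proof.

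The main technical point is the inductive extraction of the blocks $(v_k)$; the LSS assumption is used precisely by noting that every tail of $(x_n)$ spans a sublattice, so the hypothesis can be reapplied at each step. Everything else reduces to routine facts about disjoint sequences in Banach lattices and perturbations of basic sequences.
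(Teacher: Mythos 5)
Your proof is correct and follows essentially the same route as the paper's: both arguments apply the LSS hypothesis to tails of the positive disjoint sequence to extract a disjoint block sequence on which the LSS operator is negligible, and then invoke the DSS hypothesis on that disjoint block sequence. The paper phrases this directly (producing, inside any given sublattice, blocks on which both operators are compact, so the sum is compact there), whereas you arrange it contrapositively and finish with a perturbation-of-basic-sequences estimate; the underlying extraction is the same.
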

\begin{proof}
Fix a Banach lattice $E$, a Banach space $X$, an LSS operator $T:E\to X$, and a DSS operator $S:E\to X$. Let $F
$ be a sublattice of $E$. Since $T$ is LSS, we can find disjoint normalized vectors $(x_n)$ in $F$ such that
$T|_{[x_n]}$ is compact. Now use that $S$ is DSS to find a normalized block subsequence $(y_n)$ of $(x_n)$
where $S|_{[y_n]}$ is also compact. Then $(T+S)|_{[y_n]}$ is compact, so $(T+S)|_F$ is not an isomorphism.
\end{proof}

\begin{theorem}\label{equivalence} 
Let $X$ be a Banach space. The following are equivalent:
\begin{enumerate}
\item[(a)] There is a Banach lattice $E$ and an LSS operator $T:E\to X$ which is not DSS.

\item[(b)] There  are two basic sequences   $(x_n)$ and $(y_n)$ in  $X$ such that:
\begin{enumerate}
\item[(b.1)] No positive block subsequence of $(x_1,y_1,x_2,y_2,\dots)$  is unconditional.
\item[(b.2)] $(x_{n}-y_{n})$ is unconditional.
\item[(b.3)] $\|\sum_n a_n(x_{n}-y_{n})\|\approx\max\{\|\sum_n a_nx_{n}\|,\|\sum_n a_ny_{n}\|\}$.
\end{enumerate}
\item[(c)] There is a basic sequence $(x_n)$ in $X$ such that
\begin{enumerate}
\item[(c.1)] $\nrm{\sum_n |a_n|x_n}\gtrsim \nrm{\sum_n a_n x_n}$.
\item[(c.2)] No positive block subsequence of $(x_n)$ is unconditional (equiv. for every positive block subsequence $(y_n)$ of $(x_n)$
and every $\varepsilon>0$ there is $(b_n)$ such that $\nrm{\sum_n |b_n| y_n}=1$ and $\nrm{\sum_n b_n
y_n}<\varepsilon$).
\item[(c.3)] There is a block subsequence $(z_n)$ of $(x_n)$, $z_n:=\sum_k a_k x_k$, such that
$$\nrm{\sum_{n}b_n z_n}\approx \nrm{\sum_n |b_n|(\sum_k |a_k|x_k)}\approx\max\{\nrm{\sum_n b_n z_n^+},\nrm{\sum_n b_n z_n^-}\},$$
where $z_n^+:=\sum_{a_k>0}a_kx_k$ and $z_n^-:=\sum_{a_k<0}a_kx_k$.
\end{enumerate}
\item[(d)] There is a Banach lattice $E$ and two LSS operators $T_1,T_2:E\to X$ such that $T_1+T_2$ is not
LSS.
\end{enumerate}
\end{theorem}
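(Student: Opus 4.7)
The plan is to close the cycle $(a) \Rightarrow (b) \Rightarrow (c) \Rightarrow (a)$ and to establish $(a) \Leftrightarrow (d)$. The direction $(d) \Rightarrow (a)$ is immediate from Lemma~\ref{iojio34uori4jffrd}: since the sum of an LSS operator and a DSS operator is LSS, any failure of closure of LSS under addition forces both summands to fail DSS, producing (a).

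For $(a) \Rightarrow (b)$, given $T:E\to X$ LSS but not DSS, fix a disjoint normalized sequence $(f_n)\subset E$ with $T|_{[f_n]}$ an isomorphism, and set $x_n:=Tf_n^+$, $y_n:=Tf_n^-$. Property (b.2) is immediate since $x_n-y_n=Tf_n$ is the isomorphic image of the unconditional disjoint sequence $(f_n)$. Property (b.3) follows from the lattice identity
\[
\|\textstyle\sum a_nf_n\|_E = \|\textstyle\sum|a_n|(f_n^+ + f_n^-)\|_E \approx \max\{\|\textstyle\sum|a_n|f_n^+\|_E, \|\textstyle\sum|a_n|f_n^-\|_E\}
\]
combined with $T|_{[f_n]}$ being an isomorphism and $T$ merely bounded on $[f_n^\pm]$. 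The key step is (b.1). Suppose a positive block $w_n=Tu_n$ of the interleaving $(x_1,y_1,x_2,y_2,\dots)$, with $u_n$ a positive disjoint block of $(f_k^\pm)$ in $E$, were unconditional; mimicking the proof of Theorem~\ref{main}, the hypothesized unconditionality of $(Tu_n)$ substitutes for the USP step and yields nonnegative coefficients witnessing that $T$ is not bounded below on $[u_n]$. Lifting these coefficients to a positive block of $(f_n)$ and invoking Lemma~\ref{LSS}(a) produces a positive disjoint sequence in $E$ on which $T$ is an isomorphism, contradicting LSS.

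For $(b)\Rightarrow(c)$ I interleave: set $\xi_{2n-1}:=x_n$, $\xi_{2n}:=y_n$, and $z_n:=\xi_{2n-1}-\xi_{2n}=x_n-y_n$, so that $z_n^+=x_n$, $z_n^-=-y_n$, and $\sum_k|a_k|x_k=x_n+y_n$. Condition (c.2) coincides with (b.1) since a positive block of $(\xi_k)$ is precisely a positive block of $(x_1,y_1,x_2,y_2,\dots)$, and (c.1), (c.3) follow from (b.2)--(b.3) via the chain
\[
\|\textstyle\sum b_n(x_n-y_n)\| \approx \|\textstyle\sum|b_n|(x_n+y_n)\| \approx \max\{\|\textstyle\sum b_nx_n\|, \|\textstyle\sum b_ny_n\|\},
\]
the outer equivalences coming from (b.2) and (b.3), and the middle one from (b.3) applied with $|b_n|\geq 0$. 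For $(c)\Rightarrow(a)$ let $E$ be the Banach lattice with positive disjoint basis $(\hat p_n,\hat q_n)$ whose positive-cone norm is dictated by the middle term of (c.3), and define $T\hat p_n:=z_n^+$, $T\hat q_n:=-z_n^-$. Then $T$ is bounded by (c.1) and (c.3); $T$ fails DSS because $T(\hat p_n-\hat q_n)=z_n$ on the disjoint sequence $(\hat p_n-\hat q_n)$, equivalent to it by (c.3); and $T$ is LSS because any positive disjoint sequence in $E$ maps under $T$ to a positive block of $(x_n)$, which by (c.2) fails to be unconditional, ruling out $T$ being an isomorphism on any sublattice.

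Finally, for $(a)\Rightarrow(d)$, work from the sequences $(x_n),(y_n)$ produced by $(a)\Rightarrow(b)$, and let $\tilde E$ be the Banach lattice with positive disjoint basis $(\hat z_n)$ whose positive-cone norm is (a monotonization of) $\max\{\|\sum|c_n|x_n\|, \|\sum|c_n|y_n\|\}$. Define $T_1\hat z_n:=x_n$ and $T_2\hat z_n:=-y_n$. By (b.2) and (b.3), $(T_1+T_2)(\sum c_n\hat z_n)=\sum c_n(x_n-y_n)$ has norm equivalent to $\|\sum c_n\hat z_n\|_{\tilde E}$, so $T_1+T_2$ is an isomorphism on all of $\tilde E$ and, in particular, not LSS. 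On the other hand, each $T_i$ sends any positive disjoint sequence in $\tilde E$ to a positive block of $(x_n)$ or $(y_n)$, which by (b.1) cannot be unconditional, forcing each $T_i$ to be LSS. The principal obstacle throughout is (b.1), which is essentially the content of Theorem~\ref{main} without the USP assumption, handled by supplying the unconditionality hypothesis directly in place of the USP extraction.
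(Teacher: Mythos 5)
Your overall architecture matches the paper's ((a)$\Rightarrow$(b)$\Rightarrow$(c), a lattice built on $c_{00}$ for the converse, and Lemma \ref{iojio34uori4jffrd} for (d)$\Rightarrow$(a)), but there are two genuine gaps.

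First, the step (b)$\Rightarrow$(c) as you set it up is wrong. Taking the interleaved sequence $(\xi_k)=(x_1,y_1,x_2,y_2,\dots)$ as the basic sequence for (c) and $z_n=\xi_{2n-1}-\xi_{2n}$, your middle equivalence $\|\sum_n b_n(x_n-y_n)\|\approx\|\sum_n|b_n|(x_n+y_n)\|$ does not follow from (b.3): (b.3) only gives the upper bound $\|\sum|b_n|(x_n+y_n)\|\le\|\sum|b_n|x_n\|+\|\sum|b_n|y_n\|\lesssim\|\sum b_n(x_n-y_n)\|$, and the reverse inequality is precisely what must fail. Indeed, in the model coming from (a) one has $x_n+y_n=T|e_n|$ with $(|e_n|)$ a positive disjoint sequence, so LSS-ness of $T$ forces $\|\sum|b_n|(x_n+y_n)\|$ to be arbitrarily small on suitable normalized blocks while $\|\sum b_n(x_n-y_n)\|\approx 1$. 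Consequently (c.1) and (c.3) fail for $(\xi_k)$ (take $b_{2n-1}=-b_{2n}$ in (c.1)). The paper avoids this by keeping $(x_n)$ itself as the basic sequence of (c) and using (b.1) to extract a normalized block $(\sum_{k\in s_n}a_k(x_k-y_k))_n$ whose $y$-part has norm at most $2^{-n}$, so that $z_n:=\sum_{k\in s_n}a_kx_k$ inherits the required equivalences from the unconditional sequence $(x_k-y_k)$; a preliminary claim that $(x_n)$ and $(y_n)$ are positively equivalent is also needed for (c.1). Your route cannot be repaired by a small perturbation of the same idea.

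Second, in (a)$\Rightarrow$(b) the proof of (b.1) is exactly where the real work lies, and your sketch skips it. A positive block of $(x_1,y_1,x_2,y_2,\dots)$ lifts to $w_n=\sum_{k\in s_n}a_ke_k^++\sum_{k\in t_n}b_ke_k^-$ with \emph{independent} nonnegative coefficient families on $e_k^+$ and $e_k^-$ (possibly with $s_n\cap t_n\neq\emptyset$ and $a_k\neq b_k$); such a $w_n$ is \emph{not} of the form covered by Lemma \ref{LSS}(a), so "lifting these coefficients to a positive block of $(f_n)$" has no direct meaning. The paper's resolution is a sign-correction device: after using LSS to get positive blocks $z_n$ of $(w_n)$ with $Tz_n\to0$, one builds auxiliary blocks $w_n'=\sum_l c_l\sum_k d_ke_k$ with $d_k\in\{-a_k,b_k\}$ chosen according to which of $a_k,b_k$ dominates, so that $w_n'+z_n$ is positive and disjoint, $(w_n')\sim(w_n'+z_n)$ by comparing two explicit four-term maxima, and $(T(w_n'+z_n))\sim(Tw_n')\sim(w_n')$, contradicting LSS. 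Without this (or an equivalent idea) the implication is not proved. The remaining pieces of your plan — (c)$\Rightarrow$(a) via the atomic lattice and the operator $\hat p_n\mapsto z_n^+$, $\hat q_n\mapsto -z_n^-$, the direct (a)$\Rightarrow$(d) construction with $T_1,T_2$, and (d)$\Rightarrow$(a) — are essentially the paper's arguments and are sound modulo the two issues above.
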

\begin{proof}
(a) implies (b):  Suppose that $T:E\to X$ is an LSS not DSS operator. Let $(e_n)_n$ be a normalized disjoint
sequence in $E$ such that $(Te_n)_n\sim (e_n)_n$. By Lemma \ref{LSS}, we assume without loss of generality
that $(e_n)_n\sim (e_n^+)_n\sim (e_n^-)_n$. By Proposition \ref{c0l1}, $c_0$ and $\ell_1$ do not embed in
$[e_n]$, hence, since $(e_n)_n$ is unconditional, $[e_n]_n$ is reflexive. It follows that $(Te_n^+)_n$ and
$(Te_n^-)_n$ are weakly-null. By Lemma \ref{LSS} these two sequences are seminormalized. Hence, by passing to some
subsequence and reenumerating if needed, we assume that $(Te_n^+)_n$ and $(T e_n^-)_n$ are
basic sequences. 
Let $x_n=Te_n^+$ and $y_n=Te_n^-$ for every $n\in \N$. We prove that this is the required sequence.  These are two basic
sequences which satisfy (b.1)-(b.3). Indeed, (b.2) holds because $(x_{n}-y_{n})=(T e_n)\sim (e_n)$, which is
unconditional. As for (b.3),
\begin{align*}
\|\sum_n a_n(x_{n}-y_{n})\|\lesssim & \max\{\|\sum_n a_nTe_{n}^+\|,\|\sum_n a_nTe_{n}^-\|\}\lesssim \\
\lesssim & \max\{\|\sum_n a_ne_{n}^+\|,\|\sum_n a_ne_{n}^-\|\}\approx   \|\sum_n a_ne_{n}\| \approx\\
\approx & \|\sum_n a_n(x_{n}-y_{n})\|
\end{align*}
Suppose now that $(v_n)_n$ is a positive block subsequence of $(x_1,y_1,x_2,y_2,\ldots)$  which is
unconditional. Let $w_n=\sum_{k\in s_n} a_k e_{k}^++\sum_{k\in t_n} b_k e_{k}^-$, with $a_k,b_k\ge 0$, be
such that $v_n=Tw_n$. Observe that $s_n\cup t_n < s_{n+1}\cup t_{n+1}$ for every $n$. We use the fact that
$T$ is LSS and that $(v_n)_n$ is unconditional to find a positive normalized block subsequence $(z_n)$ of
$(w_n)$ such that $Tz_n\to 0$. Explicitly, let $z_n:=\sum_{l\in p_n} c_l w_l$, $c_l\ge 0$. Given $n$, and
$k\in s_n\cup t_n$, let
$$d_k:=\left\{\begin{array}{ll}
-a_k & \text{if $k\in s_n\setminus t_n$ or if $k\in s_n\cap t_n$ and $a_n\ge b_n$}\\
b_k & \text{otherwise.}
\end{array}\right.$$
Let
$$w_n':=\sum_{l\in p_n}c_l  \sum_{k\in s_l\cup t_l}d_k e_k.$$
Observe that
\begin{equation*}
w_n'+z_n= \sum_{l\in p_n}c_l \left(\sum_{k\in s_l\setminus t_l}a_k e_k^-+ \sum_{k\in t_l\setminus s_l}b_k e_k^+ +
\sum_{\underset{a_k\ge b_k}{k\in s_l\cap t_l}} (a_k+b_k)e_k^-+  \sum_{\underset{a_k< b_k}{k\in s_l\cap t_l}} (a_k+b_k)e_k^+ \right).
\end{equation*}
is positive for every $n$. Let us verify that $(w_n'+z_n)\sim (w_n')$: Fix scalars $(\lambda_n)_n$. Then, by
the unconditionality of $(e_n)$,
\begin{align}
\Big\|\sum_{n}\lambda_n w_n'\Big\|\approx  \max  & \left\{ \Big\|\sum_n \lambda_n \sum_{l\in p_n}\sum_{k\in s_l\setminus t_l}a_k e_k \Big\|,\Big\|\sum_n \lambda_n \sum_{l\in p_n}\sum_{k\in t_l\setminus s_l}b_k e_k \Big\|
,     \right.\nonumber\\
&\left. \Big\|\sum_n \lambda_n \sum_{l\in p_n}\sum_{k\in s_l\cap t_l, a_k\ge b_k}a_k e_k \Big\|,
 \Big\|\sum_n \lambda_n \sum_{l\in p_n}\sum_{k\in s_l\cap t_l, a_k< b_k}b_k e_k \Big\|  \right\} \label{ojho3irouiodf}
\end{align}
while, also using that $\{e_n^+\}_n\cup \{e_n^-\}_n$ are pairwise disjoint,
\begin{align}
\Big\|\sum_{n}\lambda_n (w_n'+z_n)\Big\|\approx  \max  & \left\{ \Big\|\sum_n \lambda_n \sum_{l\in p_n}\sum_{k\in s_l\setminus t_l}a_k e_k^- \Big\|,
\Big\|\sum_n \lambda_n \sum_{l\in p_n}\sum_{k\in t_l\setminus s_l}b_k e_k^+ \Big\|
,     \right.\nonumber\\
&\left. \Big\|\sum_n \lambda_n \sum_{l\in p_n}\sum_{\underset{a_k\ge b_k}{k\in s_l\cap t_l}}(a_k+b_k) e_k^- \Big\|,
 \Big\|\sum_n \lambda_n \sum_{l\in p_n}\sum_{\underset{a_k< b_k}{k\in s_l\cap t_l}}(a_k+b_k) e_k^+ \Big\|  \right\}\label{ojho3irouiodf1}
\end{align}
and \eqref{ojho3irouiodf} and \eqref{ojho3irouiodf1} are clearly comparable.  Finally, since $T(z_n)\to 0$,
by passing to a subsequence if needed, we have that
\begin{equation}
(T(w_n'+z_n))\sim (T(w_n'))\sim (w_n')\sim (w_n'+z_n).
\end{equation}
This is impossible since $T$ is LSS and $(w_n'+z_n)$ is a disjoint positive sequence.

(b) implies (c):  Fix $(x_n)$ and $(y_n)$ as in (b). Let us see that $(x_n)$ fulfills  the conditions (c.1),
(c.2) and (c.3).
\begin{claim*}
$(x_n)$ and $(y_n)$ are positively equivalent, that is, for every sequence of positive scalars $(a_n)_n$ one
has that $\nrm{\sum_n a_n x_n}\approx \nrm{\sum_n a_n y_n}$.
\end{claim*}
\prucl
Otherwise, we can find a positive   subsequence $(\sum_{k\in s_n}a_k x_k)_n$ such that either it is
normalized and $\nrm{\sum_{k\in s_n}a_k y_k}\le 1/2^n$ for every $n$, or the other way around. Using this and
(b.3), it follows that either $(\sum_{k\in s_n}a_k x_k)_n$ or $(\sum_{k\in s_n}a_k y_k)_n$  are equivalent to
the unconditional basic sequence $(\sum_{k\in s_n}a_k (x_k-y_k))_n$ which is impossible by (b.1), since they
are positive.
\fprucl
(c.1): Fix scalars $(a_n)$. Then,
\begin{align*}
\nrm{\sum_n a_n x_n}\lesssim & \nrm{\sum_n a_n (x_n-y_n)}\approx \nrm{\sum_n |a_n|
(x_n-y_n)} \approx\\
\approx &\max\{\nrm{\sum_n |a_n|x_n},\nrm{\sum_n |a_n|y_n}\} \approx   \nrm{\sum_n |a_n|x_n},\end{align*}
where the last equivalence follows from the previous claim. (c.2) clearly follows from (b.1). We finally
check (c.3). By (b.1), we can find a  normalized block subsequence $(\sum_{k\in s_n}a_k (x_k-y_k))_n$ of
$(x_k-y_k)$ such that $\nrm{\sum_{k\in s_n}a_k y_k}\le 1/2^n $ for every $n$.  It follows that $(\sum_{k\in
s_n}a_k (x_k-y_k))_n$ is equivalent to $(\sum_{k\in s_n}a_k x_k)_n$. Set $z_n:=\sum_{k\in s_n}a_k x_k$ and
$s_n^+:=\conj{k\in s_n}{a_k>0}$ and $s_{n}^-:=s_n\setminus s_n^+$ for every $n$. Hence, given scalars
$(b_n)_n$,
\begin{align*}
\nrm{\sum_n b_n z_n} \approx & \nrm{\sum_n b_n(\sum_{k\in s_n}a_k (x_k-y_k))}\approx  \nrm{\sum_n |b_n|(\sum_{k\in s_n}|a_k| (x_k-y_k))}\approx \\
\approx & \nrm{\sum_n |b_n|(\sum_{k\in s_n}|a_k| x_k)} \ge
\max\{\nrm{\sum_n |b_n|(\sum_{k\in s_n^+}a_k x_k)},\nrm{\sum_n |b_n|(\sum_{k\in s_n^-}a_k x_k)}\}\gtrsim \\
\gtrsim &\max\{\nrm{\sum_n b_n(\sum_{k\in s_n^+}a_k x_k)},\nrm{\sum_n b_n(\sum_{k\in s_n^-}a_k x_k)}\} \gtrsim \nrm{\sum_n b_n z_n}
\end{align*}

(c) implies (d): We fix $(x_n)$ and $(z_n)$ as in (c), $z_n^+:=\sum_{a_k>0} a_k x_k$ and $z_n^-:=\sum_{a_k<0}
a_k x_k$.     Let $E$ be the completion of $c_{00}$ under the norm
$$
\|(a_n)\|_E=\max\{\|\sum_n a_{2n}z_n\|,\|\sum_n a_{2n+1}z_n\|\}.
$$
By (c.3), the unit vector basis $(u_n)$ of $E$ is unconditional, thus, $E$ is an (atomic) Banach lattice. Let
now $T:E\rightarrow X$ be given by $T(u_{2n})=z_n^+$ and $T(u_{2n+1})=-z_n^-$. Using (c.3), we get that
\begin{align*}
\|T(\sum_n a_n u_{2n}+\sum_n b_n u_{2n+1})\|=&  \nrm{\sum_{n}a_nz_n^+-\sum_n b_nz_n^-}\lesssim \max\{\nrm{\sum_{n}a_nz_n^+},\nrm{\sum_n b_nz_n^-}\}\lesssim \\
\lesssim & \max\{\nrm{\sum_{n}a_nz_n},\nrm{\sum_n b_nz_n}\}=\\
=&\nrm{\sum_n a_n u_{2n}+\sum_n b_n u_{2n+1}}.
%
%
\end{align*}
Hence, $T$ is bounded. We see now that $(T(u_{2n}+u_{2n+1}))\sim_1 (u_{2n}+u_{2n+1})$ and consequently $T$ is
not LSS:
\begin{align*}
\nrm{T(\sum_n a_n (u_{2n}+u_{2n+1}))}=&\nrm{\sum_n a_nz_n}=\nrm{\sum_n a_n(u_{2n}+u_{2n+1})}.
\end{align*}
Let $P:E\to [u_{2n}]$ be the Boolean projection, $P(\sum_{n}a_n u_n)=\sum_{n\in 2\mathbb{N}}a_n u_n$, and let
$Q=I-P$. Since $(u_n)$ is unconditional, $P$ and $Q$ are bounded. Let $T_1=T\circ P$, $T_2=T\circ Q$. It is
clear that $T=T_1+T_2$. We see that $T_1$ and $T_2$ are LSS: Suppose otherwise that, e.g. $T_1$ is invertible
in some sublattice of $E$. It follows that there is a positive block subsequence $(v_n)_n$ of $(u_n)_n$  such
that $(T_1v_n)\sim (v_n)$.  Hence, $(TPv_n)$ is an unconditional positive block subsequence of $(x_n)$,
which is impossible by (c.2).

(d) implies (a): Fix a Banach lattice $E$ and LSS operators $T_1,T_2:E\to X$ whose sum is not LSS. It follows
from Lemma \ref{iojio34uori4jffrd} that neither $T_1$ or $T_2$ are DSS.
%
%
\end{proof}

Motivated by the above equivalences we introduce the following definition.

\begin{definition}\label{def UDP}
A Banach space $X$ has the unconditional decomposition property (UDP, in short) if for every unconditional sequence $(z_n)$ in $X$ which can be written as $z_n=x_n+y_n$ for some $(x_n),(y_n)$ in $X$ and such that $$\bignrm{\sum_{n}a_n z_n}\approx\max\Big\{\bignrm{\sum_{n}a_n x_n},\bignrm{\sum_{n}a_n y_n}\Big\},$$ then either $(x_n)$ or $(y_n)$ has an unconditional positive block sequence.
\end{definition}

Theorem \ref{equivalence} yields in particular that a Banach space $X$ has the \textsl{UDP} if and only if for every Banach lattice $E$, every LSS operator $T:E\rightarrow X$ is DSS. Hence, we can reformulate Questions \ref{mainquestion} and \ref{questionsum} into the following

\begin{question}\label{questionUDP}
Does every Banach space have the \textsl{UDP}?
\end{question}

Theorem \ref{main} yields that every Banach space with the \textsl{USP} has the \textsl{UDP}. Trivially, hereditarily indecomposable Banach spaces have the \textsl{UDP}. Note also that having the \textsl{UDP} is an hereditary property: every Banach space which embeds in a Banach space with the \textsl{UDP} has the \textsl{UDP}. It is also easy to check that the direct sum of a finite number of spaces with the \textsl{UDP} also has the \textsl{UDP}. We will see in the next section that Banach lattices with non-trivial type and order continuous stable Banach lattices have the \textsl{UDP}.

%

\section{Operators into Banach lattices}

In this section we focus our analysis on the coincidence of the classes of DSS and LSS operators taking values in a Banach lattice. The fact that the range space is a Banach lattice will allow us to circumvent in some cases the requirement of the unconditional subsequence property in Theorem \ref{main}.

First, let us recall the classical Maurey-Nikishin factorization Theorem (cf. \cite[Theorem 7.18]{albiac-kalton}).

\begin{theorem}\label{nikishin}
Let $X$ be a Banach space of type $r>1$. Suppose that $1\le p<r$ and that $T:X\rightarrow L_p(\mu)$ is an operator. Then $T$ factors through $L_q(\mu)$ for any $p<q<r$. More precisely, for each $p<q<r$ there is a strictly positive density function on $\Omega$ so that $Sx=h^{-\frac{1}{p}}Tx$ defines a bounded operator from $L_p(\mu)$ into $L_q(\Omega, h d\mu)$
\end{theorem}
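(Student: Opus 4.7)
The plan is to combine the type-$r$ hypothesis on $X$ with Khintchine's inequality in $L_p(\mu)$ to produce a vector-valued square-function estimate for $T$, and then to invoke Nikishin's change-of-density theorem to construct the weight $h$. Throughout, let $(\varepsilon_i)$ denote a Rademacher sequence on an auxiliary probability space, and write $\tau_r(X)$ for the type-$r$ constant of $X$.

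First, fix $x_1,\dots,x_n\in X$. The type-$r$ condition yields
$$\Bigl(\mathbb{E}_\varepsilon\Bigl\|\sum_i\varepsilon_i x_i\Bigr\|_X^r\Bigr)^{1/r}\le \tau_r(X)\Bigl(\sum_i\|x_i\|^r\Bigr)^{1/r},$$
and since $p<r$, monotonicity of $L_s$-norms on a probability space gives the same bound on the $p$-th moment. Applying $T$ (bounded into $L_p(\mu)$), invoking Fubini, and then Khintchine's inequality in $L_p(\mu)$ to interchange the Rademacher average with the scalar $L_p$-norm produces the key square-function estimate
$$\Bigl\|\Bigl(\sum_i|Tx_i|^2\Bigr)^{1/2}\Bigr\|_{L_p(\mu)}\lesssim \|T\|\,\tau_r(X)\Bigl(\sum_i\|x_i\|^r\Bigr)^{1/r}.$$

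Next, I would feed this estimate into Nikishin's machinery. By Chebyshev's inequality, the square-function bound translates into a weak-type estimate
$$\mu\Bigl\{\omega:\Bigl(\sum_i|Tx_i(\omega)|^2\Bigr)^{1/2}>\lambda\Bigr\}\lesssim \lambda^{-p}\Bigl(\sum_i\|x_i\|^r\Bigr)^{p/r},$$
which places $T:X\to L_0(\mu)$ in the scope of Nikishin's theorem. That theorem, proved via a Ky Fan minimax (equivalently a Hahn--Banach separation on the convex set of candidate densities in $L_1(\mu)$), produces a strictly positive density $h$ with $\int h\,d\mu=1$ such that $Sx:=h^{-1/p}Tx$ maps $X$ boundedly into $L_{r,\infty}(\Omega,h\,d\mu)$. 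Since $q<r$, a standard distribution-function computation (essentially real interpolation between $L_{r,\infty}$ and $L_p$) upgrades this to boundedness into $L_q(\Omega,h\,d\mu)$, which is the asserted factorization.

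The delicate step is the construction of $h$. Passing from a family of square-function inequalities indexed by arbitrary finite tuples $(x_1,\dots,x_n)$ in $X$ to a single weight that works uniformly for every $x\in X$ is the substantive content of Nikishin's theorem, and the compactness/convexity argument underlying the minimax is where the proof has real bite. By contrast, the type-plus-Khintchine combination at the outset and the final distribution-function upgrade are both routine once these pieces are in place.
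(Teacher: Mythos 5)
This is a statement the paper does not prove: it is recalled verbatim as ``the classical Maurey--Nikishin factorization Theorem'' with a pointer to Albiac--Kalton, Theorem~7.1.8, so there is no in-paper argument to compare yours against. Judged on its own, your sketch follows the standard textbook route, and the part you actually carry out is correct: type $r$ plus Kahane's inequality (to pass from the $r$-th to the $p$-th moment of the Rademacher average), then Fubini and Khintchine in $L_p(\mu)$, do yield the square-function estimate $\|(\sum_i|Tx_i|^2)^{1/2}\|_{L_p}\lesssim\|T\|\tau_r(X)(\sum_i\|x_i\|^r)^{1/r}$, and Chebyshev together with $\max_i|Tx_i|\le(\sum_i|Tx_i|^2)^{1/2}$ and the inclusion $\ell_q\subseteq\ell_r$ converts this into exactly the stochastic-boundedness hypothesis of Nikishin's change-of-density theorem.

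The gap, which you yourself flag, is that everything of substance is delegated to that theorem as a black box, and the delegation is done with loose bookkeeping. Two concrete points. First, the weak-type estimate you derive decays like $\lambda^{-p}$, not $\lambda^{-r}$; what it gives directly is the non-quantitative ``for every $\varepsilon$ there is $M$'' condition, and turning that into a homogeneous weak-type bound after a change of density is part of the minimax argument, not a consequence of Chebyshev --- so the assertion that $S$ lands in $L_{r,\infty}(h\,d\mu)$ at the endpoint exponent needs justification (the safe statement, and the one matching the theorem as quoted, is a factorization through $L_{q'}$ or $L_{q',\infty}$ for each $q'<r$). Second, the normalization of the density is not innocent: the theorem requires the specific twist $Sx=h^{-1/p}Tx$, with the exponent $p$ of the \emph{source} $L_p(\mu)$, precisely so that the last leg $f\mapsto h^{1/p}f$ is an isometry of $L_p(h\,d\mu)$ onto $L_p(\mu)$ recovering $T=jiS$; a generic application of Nikishin's theorem at exponent $q$ or $r$ produces a density adapted to that exponent, and converting it to the $h^{-1/p}$ form is an additional (routine but not automatic) step that your sketch skips. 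Since the paper itself only cites the result, your outline is adequate as a roadmap, but it would not stand alone as a proof without importing the full statement and proof of Nikishin's theorem with the correct normalization.
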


Note that if $i:L_q(hd\mu)\rightarrow L_p(hd\mu)$ is the natural inclusion and $j:L_p(hd\mu)\rightarrow L_p(\mu)$ is the isometric isomorphism defined by  $j(f)=h^{-\frac{1}{p}}f$, then the above operator $T$ satisfies $T=jiS$ (see \cite[pag. 167]{albiac-kalton}.

Quite unexpectedly the following local property turns out to be the key to avoid the assumption of the USP property in the range space.

\begin{theorem}\label{trivial type}
Let $E$ and $F$ be Banach lattices, with $F$ order continuous. If an LSS operator $T:E\rightarrow F$ is invertible on the span of a disjoint sequence $(x_n)$ in $E$, then $[x_n]$ contains $\ell_1^n$'s uniformly.
\end{theorem}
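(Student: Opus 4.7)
My plan is a proof by contradiction: suppose $[x_n]$ does not contain $\ell_1^n$'s uniformly, so by the Maurey--Pisier theorem $[x_n]$ has some nontrivial type $r>1$. By Lemma~\ref{LSS} we have $(x_n)\sim(x_n^+)\sim(x_n^-)$ and $(Tx_n^+),(Tx_n^-)$ are seminormalized in $F$; type $r>1$ then forces the upper estimate $\|\sum_{k\leq N}x_k\|_E\lesssim N^{1/r}$, and type $r>1$ together with $(x_n)$ being a disjoint (hence $1$-unconditional) basic sequence also makes $[x_n]$ reflexive and $(x_n^{\pm})$ weakly null. The contradiction will come from producing an opposing $\ell_q$-lower estimate on $(x_n)$ for some $1<q<r$.

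The main tool is the Maurey--Nikishin factorization (Theorem~\ref{nikishin}) applied to $iT|_{[x_n]}:[x_n]\to L_1(\mu)$ through the canonical embedding $i:F\hookrightarrow L_1(\mu)$ provided by the order continuity of $F$. For any $1<q<r$, this yields a bounded operator $\sigma:[x_n]\to L_q(h\,d\mu)$ with $\sigma y=h^{-1}iTy$. I would then apply the Kade\v{c}--Pe\l czy\'nski dichotomy to $(\sigma(x_n))$ in the order-continuous lattice $L_q(h\,d\mu)$: if alternative~(ii) holds, then after a subsequence $(\sigma(x_n))$ is close to a disjoint sequence in $L_q$. Disjoint seminormalized sequences in $L_q$ are $\ell_q$-equivalent, so combining boundedness of $\sigma$ with Lemma~\ref{LSS}(a) transfers an $\ell_q$-lower estimate $(\sum|a_n|^q)^{1/q}\lesssim\|\sum a_n x_n\|_E$ to $(x_n)$; setting $a_n=1$ for $n\leq N$ produces $N^{1/q}\lesssim N^{1/r}$, contradicting $q<r$.

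The principal technical obstacle is twofold. First, one must ensure that $(\sigma(x_n))$ is \emph{seminormalized} in the $L_q(h\,d\mu)$-norm: from the identity $\|\sigma y\|_{L_1(h\,d\mu)}=\|iTy\|_{L_1(\mu)}$ and $L_q(h\,d\mu)\hookrightarrow L_1(h\,d\mu)$ for the probability measure $h\,d\mu$, a positive uniform lower bound on $\|iTx_n\|_{L_1(\mu)}$ would suffice, which is exactly K--P alternative~(i) for $(Tx_n)$ inside $F$. Second, one must handle the case where this degenerates, i.e.\ $\|iTx_n\|_{L_1(\mu)}\to 0$: then I would apply K--P inside $F$ separately to $(Tx_n^+)$ and $(Tx_n^-)$, extract (after the standard disjointification $u_n\mapsto u_n-u_n\wedge v_n$, $v_n\mapsto v_n-u_n\wedge v_n$) positive disjoint approximations $(u_n),(v_n)$ with $\{u_n,v_n\}_n$ pairwise disjoint, and use that $(u_n+v_n)=|u_n-v_n|$ is a positive disjoint sequence in $F$ with $(u_n+v_n)\sim(x_n)$, to which the Nikishin/K--P argument can be reapplied on the sublattice $[u_n+v_n]$. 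Ruling out the cascade of degenerate sub-cases where all these K--P applications land in alternative~(i) or the Nikishin images keep collapsing in $L_q$-norm is the delicate part of the argument, and is where the order continuity of $F$ (giving the $L_1$-embedding at every lattice level) has to be used most carefully in tandem with Lemma~\ref{LSS}(b), which keeps positive blocks of $(x_n^{\pm})$ from being absorbed.
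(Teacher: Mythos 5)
Your skeleton is the same as the paper's: assume $[x_n]$ has type $r>1$, use order continuity to embed $F$ into $L_1(\mu)$, factor through $L_q(h\,d\mu)$ via Maurey--Nikishin, invoke Kade\v{c}--Pe\l czy\'nski to find an $\ell_q$-lower estimate, and clash $n^{1/q}$ against the type-$r$ upper bound $n^{1/r}$. But there is a genuine gap at the pivotal step, and you have in effect flagged it yourself: applying K--P to $(\sigma(x_n))$ and hoping for alternative~(ii) does not work, because nothing prevents alternative~(i) from holding for that sequence, in which case you extract no $\ell_q$ structure at all and the argument stalls. Your closing admission that you cannot rule out ``the cascade of degenerate sub-cases'' is precisely the missing content, not a technicality.

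The paper's resolution is to apply the factorization not to $[x_n]$ (which, being spanned by disjoint but non-positive vectors, is not a sublattice) but to the \emph{sublattice} $[x_n^+]$, which is isomorphic to $[x_n]$ by Lemma~\ref{LSS}(a) and hence still has type $r$. Because $[x_n^+]$ is a sublattice and $T$ is LSS, one can produce normalized positive blocks $w_n=\sum_{j\in A_n}a_jx_j^+$ with $\|Tw_n\|_F\to 0$; this is the sequence to feed into K--P in $L_q(h\,d\mu)$, because then $\|i_hSw_n\|_{L_1(h\,d\mu)}=\|iTw_n\|_{L_1(\mu)}\to 0$ automatically, so once $(Sw_n)$ is seminormalized in $L_q$ alternative~(i) is impossible and alternative~(ii) is \emph{forced}, yielding the $\ell_q$-equivalence and the contradiction $n^{1/q}\lesssim n^{1/r}$. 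The two remaining degenerate cases ($\|Sx_n^+\|_q\to 0$, or $\|Sw_n\|_q\to 0$) are each dispatched the same way: the corresponding $L_1$-image tends to zero while the image under $T$ in $F$ stays seminormalized (this is Lemma~\ref{LSS}(b), which you correctly identified as relevant), so K--P applied inside $F$ gives an unconditional subsequence of $(Tx_{n}^+)$ (resp.\ of $(T|w_n|)$), and unconditionality lets one take the LSS-produced null blocks with \emph{positive} coefficients, contradicting Lemma~\ref{LSS}(b). Your proposed detour through disjointification of $(Tx_n^\pm)$ in $F$ and reapplication of Nikishin on $[u_n+v_n]$ is not needed and does not by itself close the loop, since it merely reproduces the same unresolved dichotomy one level down.
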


\begin{proof}
Suppose that $T$ is LSS, and there is a disjoint normalized sequence $(x_n)$ in $E$ such that $(Tx_n)\sim(x_n)$. Let us suppose that $[x_n]$ does not contain $\ell_1^n$'s uniformly, or equivalently, there is $r>1$ such that $[x_n]$ has type $r$.

Note that by Proposition \ref{c0l1}, $[Tx_n]$ does not contain a subspace isomorphic to $\ell_1$ nor $c_0$. Hence, by \cite[Theorem 1.c.5]{LT2}, $[Tx_n]$ is reflexive. By Lemma \ref{LSS}, we have the equivalences $(x_n)\sim (x_n^+)\sim(x_n^-)\sim (Tx_n)$. Also note that no normalized positive block sequence of $(x_n^+)$, i.e
$$
y_n=\sum_{j\in A_n} a_j x_j^+
$$
with $a_j>0$, satisfies $\|Ty_n\|\rightarrow0$. Indeed, if this were the case, then, using Lemma \ref{LSS}, $T$ would be invertible on the span of some subsequence of the sequence $(\sum_{j\in A_n} a_j x_j^-)$, in contradiction with the fact that $T$ is LSS. The same holds for positive blocks of the sequence $(x_n^-)$. In particular, we can assume that $(Tx_n^+)$ and $(Tx_n^-)$ are seminormalized.

Now, since $F$ is order continuous, by \cite[Theorem 1.b.14]{LT2} we can consider the inclusion $i:F\hookrightarrow L_1(\mu)$, for some measure $\mu$. Since, $(x_n)\sim (x_n^+)$, for $q$ satisfying $1<q<r$, by Theorem \ref{nikishin} the restriction of $T$ to the Banach lattice $[x_n^+]$ factors through $L_q(hd\mu)$ with  factors $S, i_h$ and $j$ as above:
$$\xymatrix{[x_n^+]\ar_{S}[d]\ar[r]^T&F\ar@{^{(}->}^{i}[r]&L_1(\mu)\\
L_q(h d\mu)\ar@{^{(}->}^{i_h}[rr]&&L_1(h d\mu)\ar_{j}[u] }$$

If $(Sx_n^+)$ converges to zero in $L_q(hd\mu)$,  then $(iTx_n^+)$ converges to zero in $L_1(\mu)$. Since $(Tx_n^+)$ is seminormalized, by Kade\v{c}-Pe\l czy\'nski's Dichotomy we get that some subsequence $(Tx_{n_k}^+)$ is unconditional. In this case, since $T$ is LSS there exist normalized blocks $(v_j)$ of $(x_{n_k}^+)$, with $\|Tv_j\|\rightarrow0$ and since $(Tx_{n_k}^+)$ is unconditional, we can take this blocks with positive coefficients. This is a contradiction with the assumption above.

Thus, by passing to some subsequence, $(Sx_n^+)$ can be assumed to be seminormalized. Since $q>1$, passing to a subsequence we can assume that $(Sx_n^+)$ is an unconditional basic sequence in $L_q(hd\mu)$. Hence, since $T$ is LSS, we can consider
$$
w_n=\sum_{j\in A_n}a_jx_j^+
$$
such that $\|w_n\|=1$ and $\|Tw_n\|_F\rightarrow0$.

Suppose first that $\|Sw_n\|_q\rightarrow0$. Since $Sx_n^+$ is unconditional, this implies that $\|S|w_n|\|_q\rightarrow0$. Therefore, we get that $\|iT|w_n|\|_1\rightarrow0$, but we know $(T|w_n|)$ cannot go to zero, so by Kade\v{c}-Pe\l czy\'nski's Dichotomy, we get that $(T|w_n|)$ has an unconditional subsequence. This leads to a contradiction, by considering normalized blocks of $(|w_n|)$ whose image under $T$ go to zero, since using the unconditionality of $(T|w_n|)$ these blocks can be taken positive.

Hence, we can assume that $(Sw_n)$ is seminormalized in $L_q$. But since $\|iTw_n\|_1\rightarrow0$, we have that $\|i_hSw_n\|_1\rightarrow0$. Therefore, using Kade\v{c}-Pe\l czy\'nski's Dichotomy, we get a subsequence $(Sw_{n_k})$ which is equivalent to a disjoint sequence in $L_q(hd\mu)$, that is, $(Sw_{n_k})$ is equivalent to the unit vector basis of $\ell_q$. Hence, using that $[x_n^+]$ has type $r$, for every $n\in\mathbb N$ we have
$$
n^{\frac1q}\approx\Big\|\sum_{k=1}^n Sw_{n_k}\Big\|_{L_q(hd\mu)}\leq\|S\|\Big\|\sum_{k=1}^n w_{n_k}\Big\|_E\lesssim n^{\frac1r}
$$
which is a contradiction as $1<q<r$.
\end{proof}

As a consequence of this result we get:

\begin{corollary}\label{nontrivialtype}
Let $E$, $F$ Banach lattices, and $T:E\rightarrow F$ an LSS operator. $T$ is DSS under any of the following:
\begin{enumerate}
\item $F$ has non-trivial type.
\item $E$ has non-trivial type and $F$ is order continuous.
\end{enumerate}
\end{corollary}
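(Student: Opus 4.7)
My approach is to deduce both cases of the corollary directly from Theorem \ref{trivial type}. The key input will be the classical Maurey--Pisier characterization: a Banach space of non-trivial type contains no copies of $\ell_1^n$ uniformly.

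For part (2), I would argue by contradiction. Suppose $T$ is LSS but not DSS. Then there is a normalized disjoint sequence $(x_n)$ in $E$ with $T|_{[x_n]}$ an isomorphism. Since $F$ is order continuous by hypothesis, Theorem \ref{trivial type} applies and forces $[x_n]$ to contain $\ell_1^n$'s uniformly. As $[x_n]\subset E$, this contradicts the non-trivial type of $E$. The argument is essentially one line once Theorem \ref{trivial type} is available.

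For part (1), the same scheme works, but we first need to verify that $F$ is order continuous so that Theorem \ref{trivial type} can be invoked. Here I would use the standard fact that a Banach lattice with non-trivial Rademacher type is $q$-concave for some $q<\infty$ (a Krivine-type result) and that finite concavity implies order continuity of the norm. Alternatively, one can argue directly: if $F$ were not order continuous, it would contain a sublattice isomorphic to $\ell_\infty$, which in turn contains $\ell_1^n$'s uniformly, contradicting the non-trivial type of $F$. Once $F$ is known to be order continuous, the argument of (2) carries over: the disjoint sequence $(x_n)$ witnessing that $T$ is not DSS yields, via Theorem \ref{trivial type}, that $[x_n]$ contains $\ell_1^n$'s uniformly; since $T|_{[x_n]}$ is an isomorphism onto its range, the subspace $T([x_n])\subset F$ also contains $\ell_1^n$'s uniformly, contradicting the non-trivial type of $F$.

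The main (and essentially only) obstacle is the bookkeeping step in part (1), namely justifying that the range lattice is order continuous so that Theorem \ref{trivial type} is applicable. Beyond that, both statements reduce to a short contradiction argument combining Theorem \ref{trivial type} with the Maurey--Pisier description of non-trivial type.
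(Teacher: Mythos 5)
Your proof is correct and follows exactly the route the paper intends: the corollary is stated as an immediate consequence of Theorem \ref{trivial type}, and your argument supplies precisely the two missing observations, namely the Maurey--Pisier characterization of non-trivial type via uniform containment of $\ell_1^n$'s (applied to $E$ in case (2) and, via the isomorphism $T|_{[x_n]}$, to $F$ in case (1)) and the fact that a Banach lattice with non-trivial type is order continuous, so that Theorem \ref{trivial type} is applicable in case (1). One minor caveat: your alternative justification that a non-order-continuous Banach lattice contains a sublattice isomorphic to $\ell_\infty$ requires $\sigma$-order completeness in general --- without it one only gets a sublattice isomorphic to $c_0$ --- but since $c_0$ already contains $\ell_1^n$'s uniformly the alternative argument still goes through, and your primary justification via finite concavity is correct as stated.
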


Thus, Banach lattices with non-trivial type have the \textsl{UDP}. Nevertheless, non-trivial type is not a necessary condition: the most natural Banach lattices with trivial type, namely $L_1(\mu)$, also have the \textsl{UDP}. This is the content of the next result. Note that, for non-atomic measures $\mu$, the space $L_1(\mu)$ is not isomorphic to a subspace of a space with an unconditional basis (cf. \cite[Proposition 1.d.1]{LT1}). In fact, there exists weakly null sequences in $L_1(\mu)$ without unconditional subsequences \cite{JMS}. Thus, this result does not follow directly from Theorem \ref{main}.

\begin{theorem}\label{L1}
Let $E$ be a Banach lattice  and $T:E\rightarrow L_1(\mu)$ an operator. Then $T$ is LSS if and only if $T$ is DSS.
\end{theorem}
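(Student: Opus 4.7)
The direction DSS $\Rightarrow$ LSS is trivial, so the plan is to prove the converse by contradiction. Suppose $T:E\to L_1(\mu)$ is LSS but not DSS; then there is a normalized disjoint sequence $(x_n)$ in $E$ with $T|_{[x_n]}$ an isomorphism, and the goal is to derive a contradiction with LSS.

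The first step is to apply Theorem \ref{trivial type}: since $L_1(\mu)$ is order continuous, the theorem forces $[x_n]$ to contain $\ell_1^n$'s uniformly, i.e., to have trivial type in Pisier's sense. Transporting along the isomorphism $T|_{[x_n]}$, the subspace $[Tx_n]\subset L_1(\mu)$ also has trivial type.

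The second step rules out this possibility using the structure theory of subspaces of $L_1$. If $[Tx_n]$ did not contain an isomorphic copy of $\ell_1$, then by Rosenthal's $\ell_1$-theorem every bounded sequence in $[Tx_n]$ would have a weakly Cauchy subsequence; the weak sequential completeness of $L_1$ would upgrade these to weakly convergent subsequences, forcing $[Tx_n]$ to be reflexive. But by a classical theorem of Rosenthal every reflexive subspace of $L_1$ embeds in some $L_p$ with $p>1$ and hence has non-trivial type, contradicting the previous step. So $\ell_1$ must embed isomorphically into $[Tx_n]$.

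To finish, I would use that the disjoint sequence $(x_n)$ is $1$-unconditional, and hence $(Tx_n)$ is unconditional. By a standard consequence of James' theorem for unconditional basic sequences, an unconditional basic sequence whose span contains $\ell_1$ admits a normalized block subsequence equivalent to the $\ell_1$-basis; applied to $(Tx_n)$ this yields blocks $\sum_{k\in I_n}a_k Tx_k$ equivalent to the $\ell_1$-basis. The corresponding blocks $y_n=\sum_{k\in I_n}a_k x_k$ remain pairwise disjoint in $E$ (since the $I_n$ are disjoint and the $x_k$ are pairwise disjoint, so $|y_n|=\sum_{k\in I_n}|a_k||x_k|$), and $T|_{[y_n]}$ is an isomorphism onto a copy of $\ell_1$. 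Proposition \ref{c0l1}(2) then produces a sublattice of $E$ isomorphic to $\ell_1$ on which $T$ is invertible, contradicting LSS. The main technical points to check are Rosenthal's embedding theorem for reflexive subspaces of $L_1$ (used to exclude trivial type) and the block-subsequence version of James' theorem for unconditional sequences containing $\ell_1$; both are classical and available in \cite{LT1, LT2, albiac-kalton}.
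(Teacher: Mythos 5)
Your proof is correct and follows essentially the same route as the paper: apply Theorem \ref{trivial type} to get $\ell_1^n$'s uniformly in $[Tx_n]$, invoke Rosenthal's theorem on subspaces of $L_1$ to upgrade this to an isomorphic copy of $\ell_1$, and conclude via Proposition \ref{c0l1}. The extra details you supply --- the reflexivity argument behind Rosenthal's dichotomy and the block-basis version of James' theorem needed to land the $\ell_1$-copy on a disjoint sequence before applying Proposition \ref{c0l1}(2) --- are exactly the steps the paper leaves implicit.
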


\begin{proof}
Suppose $T$ is LSS but not DSS, then there is a disjoint normalized sequence $(x_n)$ in $E$ such that $(Tx_n)\sim(x_n)$. Note that, by  Theorem \ref{trivial type} $[Tx_n]$ contains $\ell_1^n$'s uniformly. By a well-known result of Rosenthal \cite{Rosenthal} (see also \cite[III.C.Theorem 12]{Woj}), $[Ty_n]$ must contain a subspace isomorphic to $\ell_1$. This is a contradiction by Proposition \ref{c0l1}.
\end{proof}

A combination of this result with Theorem \ref{main} yields that if a Banach lattice has, roughly speaking, many complemented disjoint sequences, then it has the \textsl{UDP}:
\begin{corollary}
Let $F$ be an order continuous Banach lattice such that for every disjoint sequence $(f_n)$ in $F$ there is an infinite dimensional subspace $Y\subset[f_n]$ which is complemented in $F$. Then $F$ has the UDP.
\end{corollary}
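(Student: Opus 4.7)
The plan is to argue by contradiction and deduce that any LSS operator $T: E \to F$ must be DSS, which by Theorem \ref{equivalence} is exactly what the UDP demands. Suppose $T$ is LSS but not DSS and fix a normalized disjoint sequence $(x_n)$ in $E$ with $(Tx_n) \sim (x_n)$. Proposition \ref{c0l1} applied to $T$ rules out copies of $c_0$ and $\ell_1$ inside $[x_n]$, so $[x_n]$ (and hence also $[Tx_n]$) is reflexive since the disjoint sequence $(x_n)$ is unconditional.

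The next step is to produce a disjoint sequence in the range via the Kade\v{c}-Pe\l czy\'nski dichotomy applied to $(Tx_n)$ in the order continuous lattice $F$. In alternative (i) the $F$-norm and the $L_1(\mu)$-norm are comparable on $(Tx_n)$, so $iT: E \to L_1(\mu)$ is LSS (left composition with the bounded inclusion $i$) yet an isomorphism on $[x_n]$, contradicting Theorem \ref{L1}. Alternative (ii) must then hold, producing (after passing to a subsequence and perturbing) a disjoint sequence $(g_k)$ in $F$ with $(g_k)\sim (Tx_{n_k})\sim (x_{n_k})$. The hypothesis on $F$ now supplies an infinite-dimensional subspace $Y \subset [g_k]$ and a bounded projection $P: F \to Y$.

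The final step uses Theorem \ref{main} to reach a contradiction. Since $Y$ is a subspace of $[g_k]$, which has an unconditional basis, $Y$ inherits the USP: any weakly null sequence in $Y$ is weakly null in $[g_k]$, and a standard Bessaga--Pe\l czy\'nski argument produces a subsequence equivalent to an unconditional block basis of $(g_k)$. Because $PT = P \circ T$ is LSS as a left composition with the bounded projection $P$, Theorem \ref{main} will yield a contradiction as soon as I exhibit a disjoint sequence in $E$ on which $PT$ is an isomorphism. To do this, I would use reflexivity of $[g_k]$ together with Bessaga--Pe\l czy\'nski to pick a basic sequence $(y_m) \subset Y$ equivalent to a block basis $h_m = \sum_{k \in A_m} a_{m,k}\, g_k$ of $(g_k)$ with pairwise disjoint index sets $A_m$, and lift it to $\tilde x_m := \sum_{k \in A_m} a_{m,k}\, x_{n_k}$. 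The anticipated main obstacle is precisely this lifting, but it is resolved by the elementary observation that blocks of a disjoint sequence supported on pairwise disjoint index sets remain disjoint, so $(\tilde x_m)$ is disjoint in $E$. Then $T\tilde x_m \approx h_m \sim y_m \in Y$, so $PT\tilde x_m \approx y_m \sim \tilde x_m$, giving the required isomorphism on a disjoint sequence and the desired contradiction.
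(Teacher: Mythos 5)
Your argument is correct and follows the paper's proof essentially step for step: Kade\v{c}--Pe\l czy\'nski applied to $(Tx_n)$, Theorem \ref{L1} to dispose of the equivalence alternative, and the complemented-subspace hypothesis together with Theorem \ref{main} in the disjoint alternative. You in fact supply more detail than the paper on why $PT$ fails to be DSS; the only point to tighten is that for the step $PT\tilde x_m\approx y_m$ you need the Bessaga--Pe\l czy\'nski selection to give $\|y_m-h_m\|\to 0$ (which it does), not merely equivalence of $(y_m)$ and $(h_m)$.
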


\begin{proof}
We use Theorem \ref{equivalence}. Let $T:E\rightarrow F$ be an LSS operator and $(x_n)$ a disjoint sequence in $E$ such that $(Tx_n)\sim(x_n)$. Let $i:F\rightarrow L_1(\mu)$ be the natural inclusion given by \cite[Theorem 1.b.14]{LT2}. By Kade\v{c}-Pe\l czy\'nski's dichotomy either $(iTx_n)\sim (x_n)$ or there is $(y_n)$ in $[Tx_n]$ and a disjoint sequence $(f_n)$ in $F$ such that $\|y_n-f_n\|\rightarrow0$.

Suppose $(iTx_n)\sim (x_n)$, then the operator $iT:E\rightarrow L_1(\mu)$ is an LSS operator which is not DSS. This is a contradiction with Theorem \ref{L1}. Hence, there is $(y_n)$ in $[Tx_n]$ and a disjoint sequence $(f_n)$ in $F$ with $\|y_n-f_n\|\rightarrow0$. Since by assumption $[f_n]$ contains an infinite dimensional complemented subspace, so does $[Tx_n]$. Let $Y$ denote this space and $P:F\rightarrow Y$ the corresponding projection. Now it follows that $PT:E\rightarrow Y$ is an LSS operator which is not DSS, but this is a contradiction with Theorem \ref{main}.
\end{proof}

This applies for instance to every Banach lattice with the positive Schur property, (e.g. Lorentz spaces $\Lambda_\varphi$) or more generally, to every disjointly subprojective Banach lattice (that is, those Banach lattices in which every disjoint sequence contains a complemented subsequence).

Another consequence of Theorem \ref{trivial type} is the following.

\begin{corollary}
Let $E$, $F$ be Banach lattices with $F$ order continuous. If either $E$ or $F$ is stable, then every LSS operator $T:E\rightarrow F$ is DSS.
\end{corollary}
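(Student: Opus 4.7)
The plan is a proof by contradiction. Assume $T\colon E\to F$ is LSS but not DSS, so there exists a normalized disjoint sequence $(x_n)$ in $E$ with $(Tx_n)\sim (x_n)$. Since $F$ is order continuous, Theorem \ref{trivial type} applies and forces $[x_n]$ to contain $\ell_1^n$'s uniformly; the equivalence $(Tx_n)\sim(x_n)$ transfers this property to $[Tx_n]$ as well.

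The key step is to upgrade this local $\ell_1$-structure to an actual isomorphic copy of $\ell_1$ inside $[x_n]$, and this is where stability enters, through the classical Krivine--Maurey theorem: every infinite dimensional subspace of a stable Banach space contains an isomorphic copy of $\ell_p$ for some $1\le p<\infty$. If $E$ is stable then $[x_n]$ is a stable subspace of $E$ of trivial type (since it contains $\ell_1^n$'s uniformly), so the $\ell_p$ supplied by Krivine--Maurey must be $\ell_1$. If instead $F$ is stable, the same reasoning applied to the stable subspace $[Tx_n]\subset F$ produces $\ell_1$ inside $[Tx_n]$, and the isomorphism $[x_n]\cong[Tx_n]$ carries this embedding back to $[x_n]$.

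Once $[x_n]\supset \ell_1$ is secured, I use that $(x_n)$ is unconditional (disjoint sequences in Banach lattices always are) together with the classical fact that an unconditional basic sequence whose span contains $\ell_1$ admits a block subsequence equivalent to the unit vector basis of $\ell_1$. Any such block subsequence $(y_n)$ consists of pairwise disjoint vectors, and $T$ is invertible on $[y_n]$ because $T$ is invertible on the whole of $[x_n]$. Proposition \ref{c0l1}(2) then produces a sublattice of $E$ isomorphic to $\ell_1$ on which $T$ is invertible, contradicting the LSS assumption.

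The main delicate point is the combined use of trivial type and stability to pin down $\ell_1$ (rather than some other $\ell_p$) as the embedded subspace; once that is in hand, the remainder is a routine disjoint block extraction followed by an application of Proposition \ref{c0l1}(2).
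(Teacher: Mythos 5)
There is a genuine gap at the central step. You first apply Theorem \ref{trivial type} to conclude that $[x_n]$ contains $\ell_1^n$'s uniformly, and then claim that, because $[x_n]$ (or $[Tx_n]$) is a stable space of trivial type, ``the $\ell_p$ supplied by Krivine--Maurey must be $\ell_1$.'' That inference is false: the Krivine--Maurey theorem only asserts that a stable space contains \emph{some} $\ell_p$ almost isometrically, and nothing forces that $p$ to match the type of the ambient subspace. A stable space can contain $\ell_1^n$'s uniformly (hence have trivial type) and still contain no isomorphic copy of $\ell_1$ at all; the space $\bigl(\sum_n \ell_1^n\bigr)_{\ell_2}$ is stable (being an $\ell_2$-sum of finite-dimensional spaces), contains $\ell_1^n$'s isometrically, yet is reflexive. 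So ``trivial type $+$ stable $\Rightarrow$ contains $\ell_1$'' is simply not a valid implication, and your argument never secures an actual copy of $\ell_1$ inside $[x_n]$.

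The paper avoids this by reversing the order of the two ingredients. It first uses stability to pass to a \emph{further disjoint block subsequence} $(y_n)$ of $(x_n)$ whose span is isomorphic to $\ell_p$ for some $p\in[1,\infty)$; since $T$ is still invertible on $[y_n]$ and $(y_n)$ is still disjoint, Theorem \ref{trivial type} is then applied to \emph{this} sequence, forcing $[y_n]\cong\ell_p$ to contain $\ell_1^n$'s uniformly, which is only possible when $p=1$ (as $\ell_p$ has nontrivial type for $p>1$). Proposition \ref{c0l1}(2) then yields the contradiction. Your final paragraph (extracting a disjoint $\ell_1$-block from an unconditional sequence whose span contains $\ell_1$, via the non-shrinking criterion) is essentially sound, but it is moot because the copy of $\ell_1$ it presupposes is never obtained. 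To repair the proof, restructure it as the paper does: use stability to identify the span of a disjoint block subsequence as $\ell_p$ \emph{before} invoking Theorem \ref{trivial type}.
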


\begin{proof}
Suppose $T:E\rightarrow F$ is an LSS operator which is invertible on the span of some disjoint sequence $(x_n)$. By the stability hypothesis we can assume (passing to further disjoint blocks) that $[x_n]$ is isomorphic to $\ell_p$ for some $p\in[1,\infty)$ \cite{KM}. Using Theorem \ref{trivial type}, it follows that $[x_n]$ contains $\ell_1^n$'s uniformly, hence we must have $p=1$. This is a contradiction according to Proposition \ref{c0l1}.
\end{proof}

In particular, order continuous stable Banach lattices have the \textsl{UDP}. And consequently, by a result of \cite{Garling}, every Orlicz space $L_\varphi$, with an Orlicz function $\varphi$ satisfying the $\Delta_2$ condition, is stable and order continuous, so it has the \textsl{UDP}.

\end{document}